\def\frak{\mathfrak}
\newtheorem{prop}[equation]{Proposition}
\newtheorem*{prop*}{Proposition}
\newtheorem{thm}[equation]{Theorem}
\newtheorem*{thm*}{Theorem}
\newtheorem{lem}[equation]{Lemma}
\newtheorem*{lem*}{Lemma}
\newtheorem*{kor*}{Corollary}
\numberwithin{equation}{section}
\newcommand{\frg}{\mathfrak{g}}
\newcommand{\frk}{\mathfrak{k}}
\newcommand{\frp}{\mathfrak{p}}
\newcommand{\frt}{\mathfrak{t}}
\def\bbC{\mathbb{C}}
\def\bbZ{\mathbb{Z}}
\let\ccdot\cdot
\def\cdot{\hbox to 2.5pt{\hss$\ccdot$\hss}}
\newcommand{\p}{{\frak p}}
\newcommand{\eq}{\begin{equation}}
	\newcommand{\eeq}{\end{equation}}
\newcommand{\eqn}{\begin{equation*}}
	\newcommand{\bmul}{\begin{multline*}}
		\newcommand{\eemul}{\end{multline*}}
	\newcommand{\eeqn}{\end{equation*}}
\newcommand{\pf}{\begin{proof}}
	\newcommand{\epf}{\end{proof}}
\newcommand{\la}{\lambda}
\renewcommand{\phi}{\varphi}
\newcommand{\eps}{\varepsilon}
\let\ssize\scriptstyle
\newif\ifFIRST\newdimen\MAXright\MAXright0pt
\def\sdynkin{\bgroup\eightpoint\dynkin}
\def\endsdynkin{\enddynkin\egroup}
\def\dynkin{\bgroup\FIRSTtrue\hskip.5em\setbox1\hbox{$\diagup$}%
	\setbox2\hbox{$\diagdown$}%
	\setbox0\hbox to2\wd1{\hrulefill}%
	\setbox3\hbox{$\bullet$}%
	\setbox4\hbox{$\times$}%
	\setbox7\hbox{$\circ$}
	\def\whiteroot##1{\ifFIRST\setbox5\hbox{$##1$}\ifdim\wd5>1.3em
		\hskip-.5em\hskip.5\wd5\fi\fi\FIRSTfalse
		\hskip-.25em\raise1.5\wd3\hbox to0pt{\hss\hskip.45em$
			\ssize##1$\hss}\copy7\hskip-.25em\setbox6\hbox{$##1$}
		\MAXright\wd6}
	\def\root##1{\ifFIRST\setbox5\hbox{$##1$}\ifdim\wd5>1.3em%
		\hskip-.5em\hskip.5\wd5\fi\fi\FIRSTfalse%
		\hskip-.25em\raise1.5\wd3\hbox to0pt{\hss\hskip.45em$%
			\ssize##1$\hss}\copy3\hskip-.25em\setbox6\hbox{$##1$}%
		\MAXright\wd6}%
	\def\whitedroot##1{\ifFIRST\setbox5\hbox{$##1$}\ifdim\wd5>1.3em
		\hskip-.5em\hskip.5\wd5\fi\fi\FIRSTfalse
		\hskip-.25em\lower1.8\wd3\hbox to0pt{\hss\hskip.45em$
			\ssize##1$\hss}\copy7\hskip-.25em\setbox6\hbox{$##1$}
		\MAXright\wd6}%
	\def\whiterroot##1{\hskip-.25em\copy7\hbox to0pt{\hskip.3em$\ssize##1$\hss}%
		\hskip-.25em\setbox6\hbox{\hskip.6em$##1##1$}%
		\MAXright\wd6}%
	\def\droot##1{\ifFIRST\setbox5\hbox{$##1$}\ifdim\wd5>1.3em%
		\hskip-.5em\hskip.5\wd5\fi\fi\FIRSTfalse%
		\hskip-.25em\lower1.8\wd3\hbox to0pt{\hss\hskip.45em$%
			\ssize##1$\hss}\copy3\hskip-.25em\setbox6\hbox{$##1$}%
		\MAXright\wd6}%
	\def\rroot##1{\hskip-.25em\copy3\hbox to0pt{\hskip.3em$\ssize##1$\hss}%
		\hskip-.25em\setbox6\hbox{\hskip.6em$##1##1$}%
		\MAXright\wd6}%
	\def\norroot##1{\hskip-.36em\copy4\hbox to0pt{\hskip.3em$\ssize##1$\hss}%
		\hskip-.48em\setbox6\hbox{\hskip.6em$##1##1$}%
		\MAXright\wd6}%
	\def\noroot##1{\ifFIRST\setbox5\hbox{$##1$}\ifdim\wd5>1.3em%
		\hskip-.5em\hskip.5\wd5\fi\fi\FIRSTfalse%
		\hskip-.36em\raise1.5\wd3\hbox to0pt{\hss\hskip.6em$%
			\ssize##1$\hss}\copy4\hskip-.38em\setbox6\hbox{$##1$}%
		\MAXright\wd6}%
	\def\nodroot##1{\ifFIRST\setbox5\hbox{$##1$}\ifdim\wd5>1.3em%
		\hskip-.5em\hskip.5\wd5\fi\fi\FIRSTfalse%
		\hskip-.36em\lower1.8\wd3\hbox to0pt{\hss\hskip.6em$%
			\ssize##1$\hss}\copy4\hskip-.38em\setbox6\hbox{$##1$}%
		\MAXright\wd6}%
	\def\nolink{\hskip\wd0}
	\def\link{\raise.22em\copy0}%
	\def\llink##1{\raise.32em\copy0\hskip-\wd0%
		\raise.12em\copy0\hskip-.5\wd0\hbox to0pt{\hss$##1$\hss}\hskip.5\wd0}%
	\def\lllink##1{\raise.22em\copy0\hskip-\wd0\raise.32em\copy0\hskip-\wd0%
		\raise.12em\copy0\hskip-.5\wd0\hbox to0pt{\hss$##1$\hss}\hskip.5\wd0}%
	\def\rootupright##1{\hbox to0pt{\raise.45em\copy1\hskip-.25em\raise1.3\ht1%
			\hbox{\copy3\hskip.3em$\ssize##1$}\hss}%
		\setbox6\hbox{\hskip.6em\copy1\copy1$##1##1$}%
		\ifdim\MAXright<\wd6\MAXright\wd6\fi}%
	\def\whiterootupright##1{\hbox to0pt{\raise.45em\copy1\hskip-.25em\raise1.3\ht1
			\hbox{\copy7\hskip.3em$\ssize##1$}\hss}
		\setbox6\hbox{\hskip.6em\copy1\copy1$##1##1$}
		\ifdim\MAXright<\wd6\MAXright\wd6\fi}
	\def\norootupright##1{\hbox to0pt{\raise.45em\copy1\hskip-.36em\raise1.3\ht1%
			\hbox{\copy4\hskip.3em$\ssize##1$}\hss}%
		\setbox6\hbox{\hskip.6em\copy1\copy1$##1##1$}%
		\ifdim\MAXright<\wd6\MAXright\wd6\fi}%
	\def\rootdownright##1{\hbox to0pt{\raise-.5em\copy2\hskip-.25em\raise-1.35\ht1%
			\hbox{\copy3\hskip.3em$\ssize##1$}\hss}\setbox6%
		\hbox{\hskip.6em\copy2\copy2$##1##1$}%
		\ifdim\MAXright<\wd6\MAXright\wd6\fi}%
	\def\whiterootdownright##1{\hbox to0pt{\raise-.5em\copy2\hskip-.25em\raise-1.35\ht1
			\hbox{\copy7\hskip.3em$\ssize##1$}\hss}\setbox6
		\hbox{\hskip.6em\copy2\copy2$##1##1$}
		\ifdim\MAXright<\wd6\MAXright\wd6\fi}
	\def\rootdown##1{\hbox to0pt{\hskip-.05em\vrule height.25em depth.65em%
			\hskip-.25em\raise-.95em\hbox{\copy3\hskip.3em$\ssize##1$}\hss}%
		\setbox6\hbox{$##1$}%
		\ifdim\MAXright<\wd6\MAXright\wd6\fi}%
	\def\whiterootdown##1{\hbox to0pt{\hskip-.05em\vrule height.25em depth.65em
			\hskip-.25em\raise-.95em\hbox{\copy7\hskip.3em$\ssize##1$}\hss}
		\setbox6\hbox{$##1$}
		\ifdim\MAXright<\wd6\MAXright\wd6\fi}
	\def\dots{\hskip.5em\cdots\hskip.5em}}%
\def\enddynkin{\ifdim\MAXright>1em\hskip.5\MAXright\else\hskip.5em\fi\egroup}%
\begin{document} 
	
	\title[Dirac inequality]{Dirac inequality for highest weight Harish-Chandra modules II}
	\author{Pavle Pand\v zi\'c}
	\address[Pand\v zi\'c]{Department of Mathematics, Faculty of Science, University of Zagreb, Bijeni\v cka 30, 10000 Zagreb, Croatia}
	\email{pandzic@math.hr}
	\author{Ana Prli\'c}
	\address[Prli\'c]{Department of Mathematics, Faculty of Science, University of Zagreb, Bijeni\v cka 30, 10000 Zagreb, Croatia}
	\email{anaprlic@math.hr}
	\author{Vladim\'{\i}r Sou\v cek}
	\address[Sou\v cek]{Matematick\'y \'ustav UK, Sokolovsk\'a 83, 186 75 Praha 8, Czech Republic}
	\email{soucek@karlin.mff.cuni.cz}
	\author{V\'it Tu\v cek}
	\address[Tu\v cek]{Department of Mathematics, Faculty of Science, University of Zagreb, Bijeni\v cka 30, 10000 Zagreb, Croatia}
	\email{}
	\date{}
	\thanks{P.~Pand\v zi\'c, A.~Prli\'c are and V.~Tu\v cek were supported by the QuantiXLie  Center of Excellence, a project 
		cofinanced by the Croatian Government and European Union through the European Regional Development Fund - the Competitiveness and Cohesion Operational Programme 
		(KK.01.1.1.01.0004). V.~Sou\v cek is
		supported by the grant GACR GX19-28628.}
	\subjclass[2010]{primary: 22E47}
	\keywords{}
	\begin{abstract} 
		Let $G$ be a connected simply connected noncompact exceptional simple Lie group of Hermitian type. In this paper, we work with the Dirac inequality which is a very useful tool for the classification of unitary highest weight modules. 
	\end{abstract}

	\maketitle
	
	\section{Introduction}
	
	\bigskip
	
	Let $G$ be a connected simply connected noncompact exceptional simple Lie group of Hermitian type. That means that $G$ is either of type $E_6$ or of type $E_7$. Let $\Theta$ be a Cartan involution of $G$ and let $K$ be the group of fixed points of $\Theta$. Then $K/Z$ is a maximal compact subgroup of $G/Z$, where $Z$ denotes the center of $G$.
	
	We will denote by $\mathfrak{g}_0$ the Lie algebra of $G$ and by $\mathfrak{k}_0$  the Lie algebra of  $K$. Let $\mathfrak{g}_0 = \mathfrak{k}_0 \oplus \mathfrak{p}_0$ be the Cartan decomposition and let $\mathfrak{t}_0$ be a Cartan subalgebra of $\mathfrak{k}_0$. Our assumptions on $G$ imply that $\mathfrak{t}_0$ is also a Cartan subalgebra of $\mathfrak{g}_0$. We delete the subscript $0$ to denote complexifications of Lie algebras.
	
	Let $\Delta^{+}_{\frg}\supset\Delta^{+}_{\frk}$ denote fixed sets of positive respectively positive compact roots. Since the pair $(G, K)$ is Hermitian, we have a $K$--invariant decomposition $\frp = \frp^{+} \oplus \frp^{-}$ and $\frp^ {\pm}$ are abelian subalgebras of $\frp$. Let $\rho$ denote the half sum of positive roots  for $\frg$.

	We will consider $\lambda \in \mathfrak{t}^{*}$ which are $\Delta^{+}_{\frk}$-- dominant integral ($\frac{2 \left < \lambda, \alpha \right >}{\left < \alpha, \alpha \right >} \in \mathbb{N} \cup \{0\}$). Let $N(\lambda)$ denote the generalized Verma module. By definition $N(\lambda)= S(\frp^{-}) \otimes F_{\lambda}$, where  $F_{\lambda}$ is the irreducible $\frk$--module with highest weight $\lambda$. The generalized Verma module  $N(\lambda)$ is a highest weight module.  In case $N(\lambda)$ is not irreducible, we will consider the irreducible quotient $L(\lambda)$ of $N(\lambda)$. Our main goal is to  determine those weights $\lambda$ which correspond to unitarizable $L(\lambda)$ using the Dirac inequality. We consider only real highest weights $\lambda$ since this is a necessary condition for unitarity.
	
	To learn more about highest weight modules see \cite{A}, \cite{DES}, \cite{EHW}, \cite{EJ}, \cite {ES}, \cite{J}.

	The $K$--types of $S(\frp^ {-})$ are called the Schmid modules. For each of the  Lie algebras in Table \ref{tab: table2}, the general Schmid module $s$ is a nonnegative integer combination of the so called basic Schmid modules. The basic Schmid modules for each exceptional Lie algebra $\frg_0$ for which $(G, K)$ is a Hermitian symmetric pair are given in Table \ref{tab: table2}. To learn more about the Schmid modules see \cite{S}.
	
	The Dirac operator is an element of $U(\frg) \otimes C(\mathfrak{p})$ defined as $D = \sum_i b_i \otimes d_i$ where $b_i$ is a basis of $\mathfrak{p}$ and $d_i$ is the dual basis of $\mathfrak{p}$ with respect to the Killing form $B$. The Dirac operator acts on the tensor product $X \otimes S$ where $X$ is a $(\frg, K)$--module, and $S$ is the spin module for $C(\p)$. The square of the Dirac operator is:
	\[
	D^2 =  -(\text{Cas}_{\mathfrak{g}} \otimes 1 + \|\rho\|^2) + (\text{Cas}_{\mathfrak{k}_{\Delta}} + \|\rho_{\frk}^2\|),
	\]
	where $\rho_\frk$ is a half sum of the compact positive roots. To learn more about the Dirac operators in representation theory see \cite{H}, \cite{HP1}, \cite{HP2}, \cite{HKP}).
	
	\bigskip
	
	If a $(\frg, K)$--module is unitary, then $D$ is  self adjoint with respect to an inner product, so $D^2 \geq 0$. By the formula for $D^2$ the Dirac inequality becomes explicit on any $K$--type $F_{\tau}$ of $L(\lambda) \otimes S$ 
	$$
	\|\tau + \rho_{\frk} \|^2 \geq \|\lambda + \rho\|^2.
	$$

	In \cite{EHW} it was proved that $L(\lambda)$ is unitary if and only if $D^2 > 0$ on $F_\mu\otimes\bigwedge^{\rm top}\frp^+$ for any $K$--type $F_{\mu}$ of $L(\lambda)$ other than $F_{\lambda}$, that is if and only if 
	$$
	\| \mu + \rho \|^2 > \| \lambda + \rho \|^2.
	$$
	
	The following theorem gives us motivation to study the Dirac inequality (see \cite{PPST1}):
	
	\begin{thm}
		\label{cor unit nonunit}
		Let us assume that $\frg, \rho, \lambda, s$ are as in  tables \ref{tab:table1} and \ref{tab: table2}.
		(1) Let $s_0$ be a Schmid module such that 
		the strict Dirac inequality
		\eq
		\label{strict di s}
		\|(\la-s)^++\rho\|^2> \|\la+\rho\|^2
		\eeq
		holds for any Schmid module $s$ of strictly lower level than $s_0$, and such that
		\[
		\|(\la-s_0)^++\rho\|^2< \|\la+\rho\|^2.
		\]
		Then $L(\la)$ is not unitary.
		
		(2)	If 
		\eq
		\label{strict di s}
		\|(\la-s)^++\rho\|^2> \|\la+\rho\|^2
		\eeq
		holds for all Schmid modules $s$, then $N(\la)$ is irreducible and unitary.
	\end{thm}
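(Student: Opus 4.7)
The strategy is to combine the EHW unitarity criterion recalled above with a careful analysis of which $K$--types $F_{(\la-s)^+}$ of the generalized Verma module $N(\la)$ survive in the irreducible quotient $L(\la)$. The key structural input is that, decomposing $S(\frp^{-})$ into Schmid modules and tensoring with $F_\la$, each Schmid weight $s$ contributes a distinguished $K$--type of highest weight $(\la-s)^+$ (the $W_\frk$--dominant conjugate of $\la-s$) appearing in $N(\la)$ at a level equal to the level of $s$, and moreover this PRV-type component appears with multiplicity one at the top of its level.

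For part (1) I would argue by induction on the level of Schmid modules. Under the hypothesis that the strict inequality $\|(\la-s)^++\rho\|^2 > \|\la+\rho\|^2$ holds at every level strictly below that of $s_0$, no $K$--type of the form $F_{(\la-s)^+}$ at such a level can be generated by a singular vector producing a proper submodule of $N(\la)$: such a singular vector would give a highest weight subquotient sharing the infinitesimal character of $N(\la)$, forcing the equality $\|(\la-s)^++\rho\|^2 = \|\la+\rho\|^2$ via Harish-Chandra, contradicting strictness. Consequently the distinguished $K$--type $F_{(\la-s_0)^+}$ at level $s_0$ cannot be canceled from below and descends nontrivially to $L(\la)$. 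The hypothesis $\|(\la-s_0)^++\rho\|^2 < \|\la+\rho\|^2$ then exhibits a $K$--type of $L(\la)$ violating the EHW strict Dirac inequality, so $L(\la)$ fails to be unitary.

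For part (2), if the strict inequality holds for every Schmid module then every $K$--type $F_\mu$ of $N(\la)$ with $\mu\ne\la$ satisfies $\|\mu+\rho\|^2 > \|\la+\rho\|^2$. Any nonzero proper submodule of $N(\la)$ would contain a highest weight vector giving a $K$--type $F_{\mu_0}$ with $\mu_0\ne\la$ and $\|\mu_0+\rho\|^2=\|\la+\rho\|^2$ (again by infinitesimal character matching), a contradiction. Hence $N(\la)=L(\la)$, and the universal strict inequality is precisely the EHW condition guaranteeing unitarity.

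The main obstacle is the survival claim in part (1): justifying rigorously that $F_{(\la-s_0)^+}$ passes to $L(\la)$ rather than being absorbed into the maximal proper submodule. A clean treatment requires either a Jantzen-type filtration of $N(\la)$ or direct Shapovalov-form information at the relevant weight, used in conjunction with the multiplicity-one property of the $(\la-s)^+$ component inside $F_{-s}\otimes F_\la$. The remaining ingredients---the EHW criterion and the Casimir eigenvalue computation on highest weight vectors---are essentially bookkeeping once the structural input is secured.
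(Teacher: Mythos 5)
Note first that the paper does not prove this theorem here: it explicitly states that the proof ``requires some tools from representation theory'' and defers it to the companion paper [PPST2]. There is a commented-out sketch in the source that follows essentially the route you outline — a singular $K$-type in the maximal submodule $M(\la)$ forces Dirac \emph{equality} by infinitesimal-character matching, one rules out such singular vectors below the level of $s_0$, and the EHW criterion is then applied — so your approach agrees in spirit with the one the authors appear to intend.

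The concrete gap in your write-up is the passage from ``strict Dirac inequality at the Schmid $K$-types $F_{(\la-s)^+}$ of level $< \operatorname{level}(s_0)$'' to the absence of \emph{any} singular vector below that level. You rule out singular vectors only at the PRV $K$-types $F_{(\la-s)^+}$ themselves, but a singular vector generating a proper submodule of $N(\la)$ can in principle sit at any $K$-type of $S^\ell(\frp^-)\otimes F_\la$ with $\ell < \operatorname{level}(s_0)$, and your Harish-Chandra argument gives Dirac equality at whatever $K$-type it occupies, not specifically at a Schmid one. To close this you need that $(\la-s)^+$ \emph{minimizes} $\|\cdot+\rho\|^2$ among all $K$-types arising from a given Schmid constituent of $S^\ell(\frp^-)$ tensored with $F_\la$; this is precisely Lemma \ref{gen prv}, which you allude to as PRV in your preamble but never actually invoke to compare a general $K$-type with the Schmid one. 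The same use of Lemma \ref{gen prv} is hidden in your part (2), where you assert that strict inequality at the Schmid $K$-types implies strict inequality at every $K$-type $\mu\neq\la$ of $N(\la)$. Your suggestion that a Jantzen filtration or Shapovalov-form analysis is the missing ingredient points away from the tools the paper actually builds: what is needed is the PRV-minimality of Lemma \ref{gen prv} together with the elementary fact that a $\frp^+$-singular vector of $\frk$-highest weight $\mu$ gives $\mu+\rho$ in the full Weyl-group orbit of $\la+\rho$, hence $\|\mu+\rho\|^2=\|\la+\rho\|^2$.
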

	
	In Theorem \ref{cor unit nonunit}, $(\lambda - s)^{+}$ is the unique $\frk$-dominant $W_{\frk}$-conjugate of $\lambda - s$, which means that $(\lambda - s)^{+}$ is as in the third column of Table \ref{tab: table2}.
	
	The proof of the above theorem requires some tools from representation theory, so we will omit it in this paper and prove it in \cite{PPST2}.
	
	\bigskip
	
	\begin{table}[h]
		\begin{center}
			\caption{$\rho$ and $W_{\frk}$}
			\label{tab:table1}
			\begin{tabular}{ |c|c|c|}
				\hline
				Lie algebra & $\rho$ & generators of $W_{\mathfrak{k}}$  \\ 
				\hline \hline
				$\mathfrak{e}_6$ & $(0, 1, 2, 3, 4, -4, -4, 4)$ & $s_{\eps_i \pm \eps_j}, \ 5 \geq i > j$
				\\
				\hline
				$\mathfrak{e}_7$ & $\left(0, 1, 2, 3, 4, 5, - \frac{17}{2}, \frac{17}{2} \right) $ & \makecell{$s_{\eps_i \pm \eps_j}, \ 5 \geq i > j$, \\ $s_{\frac{1}{2}(\eps_8 - \eps_7 -  \eps_6 - \eps_5 - \eps_4 - \eps_3 - \eps_2 + \eps_1)}$} \\
				\hline
			\end{tabular}
		\end{center}
	\end{table}
	
	\bigskip

	\begin{table}[h]
		\begin{center}
			\caption{The weights of basic Schmid modules and the condition for the $\frk$-highest weights $\lambda = (\lambda_1, \lambda_2, \ldots, \lambda_n)$}
			\label{tab: table2}
			\begin{tabular}{ |c|c|c| }
				\hline
				Lie algebra  & basic Schmid modules & highest weights \\ 
				\hline \hline
				
				$\mathfrak{e}_6$  & \makecell{
					$s_1  = \frac{1}{2} \left( 1, 1, 1, 1, 1, -1, -1, 1 \right)$, \\ 
					$s_2  = \left( 0, 0, 0, 0, 1, -1, -1, 1 \right)$} & \makecell{$\lambda = (\lambda_1, \lambda_2, \la_3, \la_4, \la_5, \la_6, \la_6, -\lambda_6)$ \\ $|\la_1|\leq\la_2\leq\cdots\leq\la_5$ \\ $\la_i - \la_j \in \bbZ, \ 2\la_i \in \bbZ, \ i, j\leq 5.$}
				\\
				\hline
				$\mathfrak{e}_7$ & \makecell{
					$s_1 =  \left( 0, 0, 0, 0, 0, 0, -1, 1 \right),$ \\
					$s_2 = \left( 0, 0, 0, 0, 1, 1, -1, 1 \right),$ \\
					$s_3 = \left( 0, 0, 0, 0, 0, 2, -1, 1 \right)$	
				} & \makecell{$\lambda = (\lambda_1, \lambda_2, \la_3, \la_4, \la_5, \la_6, \la_7, -\lambda_7)$ \\ $|\la_1|\leq\la_2\leq\cdots\leq\la_5$\\ $\la_i - \la_j \in \bbZ, \ 2\la_i \in \bbZ, \ i, j\leq 5$ \\ and $\frac{1}{2} \left( \lambda_8 - \sum_{i = 2}^{7} \lambda_i + \lambda_1 \right) \in \mathbb{N}_0 $ } \\
				\hline
			\end{tabular}
		\end{center}
	\end{table}

	In Table \ref{tab:table1}, $s_{\alpha}(\lambda) = \lambda - \frac{2 \langle \lambda, \alpha \rangle}{\langle\alpha,\alpha\rangle} \alpha$ is the reflection of $\lambda$ with respect to the hyperplane orthogonal to a root $\alpha$, $W_{\frk}$ is the Weyl group of $\frk$ generated by the $s_{\alpha}$ and $\mathbb{N}_0 = \mathbb{N} \cup \{0\}$.
	
	Here $\lambda$ and $\rho$ are elements of $\frt^*$ which is identified with $\bbC^n$, and $\eps_i$ denotes the projection to the $i$-th coordinate. The roots are certain functionals on $\frt^*$ and the relevant ones are those in the subscripts of the reflections $s$ in Table \ref{tab:table1}, like $\eps_i-\eps_j$ or $\eps_i+\eps_j$.
	
	\bigskip

	We will frequently use the following lemma in our calculations (see \cite{PPST1}): 
	
	\begin{lem}
		\label{gen prv}
		Let $\frg$ be one of the Lie algebras listed in the above tables. Let $\mu$ and $\nu$ be weights as in Table \ref{tab: table2}. Let $w_1,w_2\in W_\frk$. Then
		\[
		\|(w_1\mu-w_2\nu)^++\rho\|^2\geq \|(\mu-\nu)^++\rho\|^2.
		\]
	\end{lem}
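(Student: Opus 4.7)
The strategy is to realize both $(w_1\mu-w_2\nu)^+$ and $(\mu-\nu)^+$ as highest weights of $\frk$-types occurring in a single tensor product of finite-dimensional $\frk$-modules, and then to invoke the PRV inequality. First, since $W_\frk$ acts on $\frt^*$ by isometries, since $\rho_n:=\rho-\rho_\frk$ is $W_\frk$-invariant (in the Hermitian symmetric setting $K$ permutes the noncompact positive roots), and since $(\cdot)^+$ denotes the unique $\frk$-dominant $W_\frk$-conjugate, we have
\[
(w_1\mu-w_2\nu)^+ = (w_1(\mu - w_1^{-1}w_2\nu))^+ = (\mu - w_1^{-1}w_2\nu)^+,
\]
so it suffices to treat $w_1=\mathrm{id}$, i.e.\ to show $\|(\mu-w\nu)^+ +\rho\|^2 \geq \|(\mu-\nu)^+ +\rho\|^2$ for every $w\in W_\frk$.

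Next I would set up an auxiliary tensor product. Let $w_0$ denote the longest element of $W_\frk$ and let $\nu^*:=-w_0\nu$ be the highest weight of the dual $\frk$-module $F_\nu^*$. Then $-w\nu=ww_0\nu^*$ is a weight of $F_\nu^*$ for every $w\in W_\frk$, so $\mu-w\nu$ is a weight of $F_\mu\otimes F_\nu^*$. By the generalized PRV theorem of Kumar and Mathieu, the $\frk$-type of highest weight $(\mu-w\nu)^+=(\mu+ww_0\nu^*)^+$ appears as an irreducible summand of $F_\mu\otimes F_\nu^*$; in particular so does $F_{(\mu-\nu)^+}$, which corresponds to $w=\mathrm{id}$ (equivalently, to the PRV component attached to $w_0$).

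I would then apply the PRV inequality developed in \cite{PPST1}: for every irreducible summand $F_\sigma\subset F_\mu\otimes F_\nu^*$,
\[
\|\sigma+\rho_\frk\|^2 \geq \|(\mu-\nu)^+ + \rho_\frk\|^2,
\]
the right-hand side being the minimum $\rho_\frk$-shifted norm among PRV components, achieved on the one attached to $w_0$. Applied with $\sigma=(\mu-w\nu)^+$ this yields the claim on $\rho_\frk$-shifted norms. To transfer from $\rho_\frk$ to $\rho$, I would use $W_\frk$-invariance of $\rho_n$: writing $(\mu-w\nu)^+=u(\mu-w\nu)$ for a suitable $u\in W_\frk$, one obtains $\langle(\mu-w\nu)^+,\rho_n\rangle=\langle\mu,\rho_n\rangle-\langle\nu,\rho_n\rangle$, independent of $w$, and the same for $w=\mathrm{id}$. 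Since $\|\sigma+\rho\|^2-\|\sigma+\rho_\frk\|^2=2\langle\sigma,\rho_n\rangle+\|\rho_n\|^2$, the two inequalities are equivalent.

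The main technical hurdle is the third step: verifying that, among the PRV components of $F_\mu\otimes F_\nu^*$, the one associated to the longest element $w_0$ minimizes $\|\cdot+\rho_\frk\|^2$. This is essentially Parthasarathy's Dirac-operator inequality in finite dimensions, and is the nontrivial content we would import from \cite{PPST1}; the remaining manipulations are purely formal and rely only on $W_\frk$-isometry together with the $W_\frk$-invariance of $\rho_n$ in the Hermitian symmetric setting.
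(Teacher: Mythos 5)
Your proposal is correct and uses essentially the same toolkit the authors indicate in a commented-out draft of this section: the Kumar--Mathieu generalized PRV theorem, minimality of the PRV component in the $\rho_\frk$-shifted norm, and $W_\frk$-invariance of $\rho_n=\rho-\rho_\frk$. The paper itself defers the actual proof to the companion paper \cite{PPST2}, but your reduction to $w_1=\mathrm{id}$, the choice of tensor factor $F_\nu^*$, and the $\rho_\frk$-to-$\rho$ transfer are all sound (and since $\rho_n$ is orthogonal to the compact roots, $\langle\rho_\frk,\rho_n\rangle=0$, so your displayed formula for $\|\sigma+\rho\|^2-\|\sigma+\rho_\frk\|^2$ is exact rather than merely correct up to a $\sigma$-independent constant).
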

	In Lemma \ref{gen prv}, $(w_1\mu-w_2\nu)^+$ is the unique dominant $W_{\frk}$-conjugate of $w_1\mu-w_2\nu$, which means $(w_1\mu-w_2\nu)^+$ is as in the third column of Table \ref{tab: table2}. The proof requires some representation theory and we leave it for \cite{PPST2}.
	
	\medskip

	\section{Dirac inequalities}

	\subsection{Dirac inequality for $\mathfrak{e}_6$} 
	The basic Schmid $\mathfrak{k}$--modules in $S(\p^{-})$ have lowest weight $-s_i$, $i = 1, 2$, where
	\begin{align*}
		s_1 & = \beta_1 =  \frac{1}{2} \left( 1, 1, 1, 1, 1, -1, -1, 1 \right), \\
		s_2 & = \beta_1 + \beta_2 = \left( 0, 0, 0, 0, 1, -1, -1, 1 \right).
	\end{align*}
	The highest weight $(\mathfrak{g} ,K)$--modules have highest weights of the form
	\begin{align*}
		\lambda  = (\lambda_1, \lambda_2, \lambda_3, \lambda_4, \lambda_5, \lambda_6, \lambda_6, - \lambda_6), \quad & |\lambda_1| \leq \lambda_2 \leq \lambda_3 \leq \lambda_4 \leq \lambda_5, \\
		& \lambda_i - \lambda_j \in \mathbb{Z}, \ 2 \lambda_i \in \mathbb{Z}, \quad i, j \in \{1,2,3,4,5\}
	\end{align*}
	In this case
	$$
	\rho = (0, 1 , 2, 3, 4, -4, -4, 4).
	$$
	The basic necessary condition for unitarity is the Dirac inequality
	$$
	||(\lambda - s_1)^{+} + \rho||^2 \geq ||\lambda + \rho||^2.
	$$
	As before, we write $(\lambda - s_1)^{+} = \lambda - \gamma_1$. Then the Dirac inequality is equivalent to 
	$$
	2 \left < \gamma_1 \, | \, \lambda + \rho \right > \leq \|\gamma_1\|^2.
	$$
	
	We have 
	\begin{align*}
		\lambda - s_1 & = \left (\lambda_1 - \frac{1}{2}, \lambda_2 - \frac{1}{2}, \lambda_3 -  \frac{1}{2}, \lambda_4 - \frac{1}{2}, \lambda_5 - \frac{1}{2}, \lambda_6 + \frac{1}{2}, \lambda_6 + \frac{1}{2}, - \lambda_6 - \frac{1}{2} \right ) \\
		\lambda  + \rho & = (\lambda_1, \lambda_2 + 1, \lambda_3 + 2, \lambda_4 + 3, \lambda_5 + 4, \lambda_6 - 4, \lambda_6 - 4, -\lambda_6 + 4)
	\end{align*}
	There are eight cases. 
	
	\textbf{Case 1.1: $\lambda_1 + \lambda_2 \geq 1$.} In this case $\gamma_1 = s_1$. The basic inequality is equivalent to
	$$
	\sum_{i = 1}^{5}\lambda_i  + 20 \leq 3 \lambda_6.
	$$
	\textbf{Case 1.2: $\lambda_2 = - \lambda_1, \lambda_3 - \lambda_2 \geq 1$.}  In this case $\gamma_1 = \frac{1}{2}(-1, -1, 1, 1, 1, -1, -1, 1)$. The basic inequality is equivalent to
	$$
	\sum_{i = 1}^{5}\lambda_i  + 18 \leq 3 \lambda_6.
	$$
	\textbf{Case 1.3: $\lambda_3 = \lambda_2 = - \lambda_1, \lambda_2 > 0,  \lambda_4 - \lambda_2 \geq 1$.}  In this case $\gamma_1 = \frac{1}{2}(-1, 1, -1, 1, 1, -1, -1, 1)$. The basic inequality is equivalent to
	$$
	\sum_{i = 1}^{5}\lambda_i  + 16 \leq 3 \lambda_6.
	$$
	\textbf{Case 1.4: $\lambda_3 = \lambda_2 = \lambda_1 = 0, \lambda_4 \geq 1$.}  In this case $\gamma_1 = \frac{1}{2}(1, -1, -1, 1, 1, -1, -1, 1)$. The basic inequality is equivalent to
	$$
	\sum_{i = 1}^{5}\lambda_i  + 14 \leq 3 \lambda_6.
	$$
	\textbf{Case 1.5: $\lambda_4 = \lambda_3 = \lambda_2 = - \lambda_1, \lambda_2 > 0, \lambda_5 - \lambda_2 \geq 1$.}  In this case $\gamma_1 = \frac{1}{2}(-1, 1, 1, -1, 1, -1, -1, 1)$. The basic inequality is equivalent to
	$$
	\sum_{i = 1}^{5}\lambda_i  + 14 \leq 3 \lambda_6.
	$$
	\textbf{Case 1.6: $\lambda_4 = \lambda_3 = \lambda_2 = \lambda_1 = 0, \lambda_5 - \lambda_2 \geq 1$.}  In this case $\gamma_1 = \frac{1}{2}(-1, -1, -1, -1, 1, -1, -1, 1)$. The basic inequality is equivalent to
	$$
	\sum_{i = 1}^{5}\lambda_i  + 8 \leq 3 \lambda_6.
	$$
	\textbf{Case 1.7: $\lambda_5 = \lambda_4 = \lambda_3 = \lambda_2 = -\lambda_1, \lambda_2 > 0$.}  In this case $\gamma_1 = \frac{1}{2}(-1, 1, 1, 1, -1, -1, -1, 1)$. The basic inequality is equivalent to
	$$
	\sum_{i = 1}^{5}\lambda_i  + 12 \leq 3 \lambda_6.
	$$
	\textbf{Case 1.8: $\lambda_5 = \lambda_4 = \lambda_3 = \lambda_2 = \lambda_1 = 0$.}  In this case $\gamma_1 = \frac{1}{2}(1, -1, -1, -1, -1, -1, -1, 1)$. The basic inequality is equivalent to
	$$
	\sum_{i = 1}^{5}\lambda_i \leq 3 \lambda_6,
	$$
	i.e. $\lambda_6 \geq 0$.
	
	Now we are going to see in which cases the Dirac inequality holds for $s_2$. We have 
	$$
	\lambda - s_2 = (\lambda_1, \lambda_2, \lambda_3, \lambda_4, \lambda_5 - 1, \lambda_6 + 1, \lambda_6 + 1, -\lambda_6 -1).
	$$
	We write $(\lambda - s_2)^{+} = \lambda - \gamma_2$. Then the Dirac inequality for $s_2$
	$$
	\|(\lambda - s_2)^{+} + \rho \|^2 \geq \|\lambda + \rho \|^2
	$$
	is equivalent to
	$$
	2\left < \gamma_2, \lambda + \rho \right > \leq \| \gamma_2 \|^2
	$$
	There are seven cases. 
	
	\textbf{Case 2.1: $\lambda_5 \neq \lambda_4$.} In this case $\gamma_2 = s_2$. The Dirac inequality for $s_2$ is equivalent to
	$$
	\lambda_5  + 14 \leq 3 \lambda_6.
	$$
	\textbf{Case 2.2: $\lambda_5 = \lambda_4 > \lambda_3$.}  In this case $\gamma_2 = (0, 0, 0, 1, 0, -1, -1, 1)$. The Dirac inequality for $s_2$ is equivalent to
	$$
	\lambda_5  + 13 \leq 3 \lambda_6.
	$$
	\textbf{Case 2.3: $\lambda_5 = \lambda_4 = \lambda_3 > \lambda_2$.}  In this case $\gamma_2 = (0, 0, 1, 0, 0, -1, -1, 1)$. The Dirac inequality for $s_2$ is equivalent to
	$$
	\lambda_5  + 12 \leq 3 \lambda_6.
	$$
	\textbf{Case 2.4: $\lambda_5 = \lambda_4 = \lambda_3 = \lambda_2 > | \lambda_1 |$.}  In this case $\gamma_2 = (0, 1, 0, 0, 0, -1, -1, 1)$. The Dirac inequality for $s_2$ is equivalent to
	$$
	\lambda_5  + 11 \leq 3 \lambda_6.
	$$
	\textbf{Case 2.5: $\lambda_5 = \lambda_4 = \lambda_3 = \lambda_2 = \lambda_1 > 0$.}  In this case $\gamma_2 = (1, 0, 0, 0, 0, -1, -1, 1)$. The basic inequality for $s_2$ is equivalent to
	$$
	\lambda_5  + 10 \leq 3 \lambda_6.
	$$
	\textbf{Case 2.6: $\lambda_5 = \lambda_4 = \lambda_3 = \lambda_2 = -\lambda_1 > 0$.}   In this case $\gamma_2 = (-1, 0, 0, 0, 0, -1, -1, 1)$. The Dirac inequality for $s_2$ is equivalent to
	$$
	\lambda_5  + 10 \leq 3 \lambda_6.
	$$
	\textbf{Case 2.7: $\lambda_5 = \lambda_4 = \lambda_3 = \lambda_2 = \lambda_1 = 0$.}  In this case $\gamma_2 = (0,0, 0, 0, -1, -1, -1, 1)$. The Dirac inequality for $s_2$ is equivalent to
	$$
	\lambda_5 + 6 \leq 3 \lambda_6,
	$$
	i.e. $\lambda_6 \geq 2$.
	
	It is easy to see that in the cases $1.1, 1.2, 1.3, 1.4, 1.5$ and $1.7$ if the Dirac inequality holds for $s_1$ then it also holds for $s_2$, since
	$$
	\lambda_5 \leq \sum_{i = 1}^{5} \lambda_i
	$$
	
	Therefore we have three basic cases:
	\bigskip
	
	\textbf{Case 1:} $\lambda_ i = 0, \quad i \in \{1, 2, 3, 4, 5\}$. 
	
	In this case the basic Dirac inequality can be written as
	$$
	\lambda_6 \geq 0.
	$$
	The Dirac inequality for the second basic Schmid module is equivalent to
	$$
	\lambda_6 \geq 2.
	$$
	
	\textbf{Case 2:} $\lambda_ i = 0, \quad i \in \{1, 2, 3, 4\}, \quad  \lambda_5 \neq 0$.
	
	In this case the basic Dirac inequality can be written as
	$$
	\lambda_5 + 8 \leq 3 \lambda_6.
	$$
	The Dirac inequality for the second basic Schmid module is equivalent to
	$$
	\lambda_5 + 14 \leq 3 \lambda_6.
	$$

	\textbf{Case 3:} $\lambda$ is of type 1.1, 1.2, 1.3, 1.4, 1.5 or 1.7, i.e. $(\lambda_1, \lambda_2, \lambda_3, \lambda_4) \neq (0, 0, 0, 0)$.
	The Dirac inequality for the second basic Schmid module is automatically satisfied if the basic Dirac inequality holds.
	
	\bigskip
	
	Let
	$$
	s_{a, b} = a s_1 + b s_2 = \left( \frac{a}{2}, \frac{a}{2}, \frac{a}{2}, \frac{a}{2}, \frac{a}{2} + b, - \frac{a}{2} - b, - \frac{a}{2} - b, \frac{a}{2} + b \right ), \quad a, b \in \mathbb{N}_{0}, \quad  a + b > 0
	$$
	be a general Schmid module.
	
	\begin{thm}(Case 1)
		\label{first five zero e6}
		Let $\lambda$ be the highest weight of the form $\lambda = (0, 0, 0, 0, 0, \lambda_6, \lambda_6, - \lambda_6).$
		\begin{enumerate}
			\item   If $\lambda_6 > 2$ then $\lambda$ satisfies the strict Dirac inequality 
			$$
			\|(\lambda - s_{a,b})^{+} + \rho  \|^2 > \| \lambda + \rho\|^2 \quad  \forall a,b \in \mathbb{N}_{0}, a + b \neq 0.
			$$
			\item  If $0 < \lambda_6 < 2$ then 
			$$
			\|(\lambda - s_2)^{+} + \rho  \|^2 < \| \lambda + \rho\|^2
			$$
			and the strict Dirac inequality holds for any Schmid module of strictly lower level than $s_2$.
			\item If $\lambda_6 < 0$ than the basic Dirac inequality fails. 
		\end{enumerate}
	\end{thm}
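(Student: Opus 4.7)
The plan is to dispatch (3) and (2) directly from the case analysis already recorded above, and to reduce (1) to a single quadratic inequality in the integer parameters $a,b$.

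For (3), since $\lambda_1=\cdots=\lambda_5=0$, Case~1.8 shows that the basic Dirac inequality reduces to $\lambda_6\geq 0$, which fails when $\lambda_6<0$.

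For (2), the level of $s_{a,b}=as_1+bs_2$ equals $a+2b$, so the only Schmid module with $a+b>0$ of level strictly less than that of $s_2$ is $s_1$ itself. Case~1.8 gives
\[
\|(\lambda-s_1)^{+}+\rho\|^2-\|\lambda+\rho\|^2=3\lambda_6,
\]
which is strictly positive for $\lambda_6>0$, while Case~2.7 gives
\[
\|(\lambda-s_2)^{+}+\rho\|^2-\|\lambda+\rho\|^2=6\lambda_6-12,
\]
which is strictly negative for $\lambda_6<2$.

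For (1), I would compute $(\lambda-s_{a,b})^{+}$ for arbitrary $(a,b)\in\mathbb{N}_0^2$ with $a+b>0$. The first five entries of $\lambda-s_{a,b}$ are $(-\tfrac a2,-\tfrac a2,-\tfrac a2,-\tfrac a2,-\tfrac a2-b)$; since $W_\frk\cong W(D_5)$ acts on these coordinates by permutations combined with an even number of sign changes, the unique $\frk$-dominant representative is $(-\tfrac a2,\tfrac a2,\tfrac a2,\tfrac a2,\tfrac a2+b)$, with the last three coordinates left unchanged. Writing $(\lambda-s_{a,b})^{+}=\lambda-\gamma_{a,b}$, a direct computation yields $\|\gamma_{a,b}\|^2=2a^2+4ab+4b^2$ and $2\langle\gamma_{a,b},\lambda+\rho\rangle=2a+16b-(3a+6b)\lambda_6$, so the strict Dirac inequality rearranges to
\[
3(a+2b)\lambda_6>2a+16b-2a^2-4ab-4b^2.
\]
For $\lambda_6>2$ the left side strictly exceeds $6(a+2b)$, so it suffices to establish the polynomial inequality $2a^2+4ab+4b^2+4a-4b\geq 0$ for all $a,b\in\mathbb{N}_0$. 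Viewed as a quadratic in $a$ with positive leading coefficient whose vertex lies at $a=-(b+1)<0$, its minimum on $[0,\infty)$ is attained at $a=0$ and equals $4b(b-1)$, which is non-negative for every $b\in\mathbb{N}_0$.

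The main point requiring care is the bookkeeping in the dominant-chamber computation: when $a=0$ or $b=0$ some entries of $\lambda-s_{a,b}$ vanish, so certain reflections in $W_\frk$ fix the vector and the parity of genuine sign changes must be handled through auxiliary compositions such as $s_{\eps_4+\eps_5}$ followed by $s_{\eps_4-\eps_5}$. In every degenerate subcase the dominant representative still takes the form described above, so the formula for $\gamma_{a,b}$ and the resulting inequality are unchanged.
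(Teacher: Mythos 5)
Your proposal is correct and follows essentially the same line as the paper's own proof: you compute the $\frk$-dominant conjugate $(\lambda - s_{a,b})^{+}$ explicitly (landing on the same $\gamma_{a,b}$, namely first coordinate $\tfrac a2$, next four $-\tfrac a2,\ldots,-\tfrac a2 - b$), reduce the strict Dirac inequality to the same quadratic condition in $(a,b)$, and then use the strict bound $\lambda_6 > 2$ to replace $3(a+2b)\lambda_6$ by $6(a+2b)$; parts (2) and (3) are read off from Cases 1.8 and 2.7 exactly as in the paper. The only difference is cosmetic: you verify the residual inequality $2a^2+4ab+4b^2+4a-4b\geq 0$ by locating the vertex of the quadratic in $a$ at $a=-(b+1)$, a step the paper states without detail.
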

	\begin{proof}
		\begin{enumerate}
			\item We have
			$$
			\lambda - s_{a, b} = \left( -\frac{a}{2}, -\frac{a}{2}, -\frac{a}{2}, -\frac{a}{2}, -\frac{a}{2} - b, \lambda_6 +  \frac{a}{2} + b, \lambda_6 + \frac{a}{2} + b, - \lambda_6 -\frac{a}{2} - b \right ), 
			$$
			and therefore
			\begin{align*}
				& (\lambda- s_{a,b})^{+} = \left( -\frac{a}{2}, \frac{a}{2}, \frac{a}{2}, \frac{a}{2}, \frac{a}{2} + b,  \lambda_6 + \frac{a}{2} + b,  \lambda_6 + \frac{a}{2} + b, -\lambda_6 -\frac{a}{2} - b \right ) \\
				&  = \lambda -  \left( \frac{a}{2}, -\frac{a}{2}, -\frac{a}{2}, -\frac{a}{2}, -\frac{a}{2} - b,  -\frac{a}{2} - b,  - \frac{a}{2} - b,   \frac{a}{2} + b \right ). 
			\end{align*}
			
			Then the strict Dirac inequality
			$$
			\|(\lambda - s_{a,b})^{+} + \rho  \|^2 > \| \lambda + \rho\|^2
			$$
			is equivalent to 
			$$
			2 \left <\gamma_{a,b} \, | \, \lambda +\rho \right > < ||\gamma_{a,b}||^2,
			$$
			where $\gamma_{a,b} = \left( \frac{a}{2}, -\frac{a}{2}, -\frac{a}{2}, -\frac{a}{2}, -\frac{a}{2} - b, -\frac{a}{2} - b, -\frac{a}{2} - b, \frac{a}{2} + b \right)$
			and this inequality is equivalent to
			$$
			-2 a^2 - 4 b^2 - 4ab - 10a - 8b < 3(\lambda_6 - 4)(a + 2b).
			$$
			Since $\lambda_6 > 2$, $3(\lambda_6 - 4)(a + 2b) > -6(a + 2b)$. Furthermore, the inequality
			$$
			-2 a^2 - 4 b^2 - 4ab - 10a - 8b \leq -6(a + 2b)
			$$
			holds for all $a, b \in \mathbb{N}_{0}, a + b \neq 0$. 
			So the strict Dirac inequality holds for any Schmid module $s_{a, b}$. 
			
			\item If $0 < \lambda_6 < 2$ then 
			$$
			\|(\lambda - s_2)^{+} + \rho  \|^2 < \| \lambda + \rho\|^2.
			$$
			Since the level of $s_2$ is equal to two, and the level of $a s_1 + b s_2$ is equal to $a + 2b$, the only Schmid module of strictly lower level than $s_2$ is $s_1$.
			
			For $s_1$ we have $\lambda_6 > 0$, which implies 
			$$
			\|(\lambda - s_1)^{+} + \rho  \|^2 > \| \lambda + \rho\|^2.
			$$
			
			\item  If $\lambda_6 < 0$ than the basic Dirac inequality obviously fails since the basic Dirac inequality in Case 1 is equivalent to $\lambda_6 \geq 0$.
		\end{enumerate}
	\end{proof}

	\begin{thm}(Case 2)
		
		\label{first four zero e6}
		Let $\lambda$ be the highest weight of the form $\lambda = (0, 0, 0, 0, \lambda_5 , \lambda_6, \lambda_6, - \lambda_6)$ 
		\begin{enumerate}
			\item If  $3\lambda_6 - \lambda_5 > 14$ than $\lambda$ satisfies the strict Dirac inequality 
			$$
			\|(\lambda - s_{a,b})^{+} + \rho  \|^2 > \| \lambda + \rho\|^2 \quad  \forall a,b \in \mathbb{N}_{0}, a + b \neq 0.
			$$
			\item If $8 < 3\lambda_6 - \lambda_5 < 14$ then 
			$$
			\|(\lambda - s_2)^{+} + \rho  \|^2 < \| \lambda + \rho\|^2
			$$
			and the strict Dirac inequality holds for any Schmid module of strictly lower level than $s_2$.
			\item If $3 \lambda_6 - \lambda_5 < 8$ than the basic Dirac inequality fails.
		\end{enumerate}
	\end{thm}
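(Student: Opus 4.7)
The plan is to follow the structure of the proof of Theorem \ref{first five zero e6} above, adapted to the presence of the nonzero $\lambda_5$. Parts (2) and (3) will follow immediately from the preliminary case-by-case analysis already carried out just before Theorem \ref{first five zero e6}. There we recorded that in Case 2 the basic Dirac inequality is equivalent to $\lambda_5 + 8 \leq 3\lambda_6$ and the Dirac inequality for $s_2$ is equivalent to $\lambda_5 + 14 \leq 3\lambda_6$. So for part (3), if $3\lambda_6 - \lambda_5 < 8$ then already the basic Dirac inequality fails. For part (2), the hypothesis $8 < 3\lambda_6 - \lambda_5 < 14$ says that the basic inequality holds strictly while the $s_2$ inequality fails; since $s_{a,b} = as_1 + bs_2$ has level $a + 2b$ and $s_2$ has level $2$, the only Schmid module of strictly lower level than $s_2$ is $s_1$, which is covered by the strict basic inequality.

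For part (1), the main step is to evaluate $\gamma_{a,b} = \lambda - (\lambda - s_{a,b})^{+}$ and verify $2\langle\gamma_{a,b},\lambda+\rho\rangle < \|\gamma_{a,b}\|^{2}$ by a case split according to the sign and size of $\lambda_5 - \tfrac{a}{2} - b$. In the generic case $\lambda_5 \geq a + b$, since $W_\frk$ is of type $D_5$ and hence permits only an even number of sign changes, one finds
\[
(\lambda - s_{a,b})^{+} = \bigl(\tfrac{a}{2}, \tfrac{a}{2}, \tfrac{a}{2}, \tfrac{a}{2}, \lambda_5 - \tfrac{a}{2} - b, \lambda_6 + \tfrac{a}{2} + b, \lambda_6 + \tfrac{a}{2} + b, -\lambda_6 - \tfrac{a}{2} - b\bigr),
\]
so $\gamma_{a,b} = (-\tfrac{a}{2}, -\tfrac{a}{2}, -\tfrac{a}{2}, -\tfrac{a}{2}, \tfrac{a}{2}+b, -\tfrac{a}{2}-b, -\tfrac{a}{2}-b, \tfrac{a}{2}+b)$, and a direct calculation gives $\|\gamma_{a,b}\|^{2} = 2a^{2} + 4ab + 4b^{2}$ together with $2\langle\gamma_{a,b},\lambda+\rho\rangle = -6a + (a+2b)(\lambda_5 - 3\lambda_6 + 16)$. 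The strict Dirac inequality thereby becomes
\[
-6a + (a+2b)(\lambda_5 - 3\lambda_6 + 16) < 2a^{2} + 4ab + 4b^{2}.
\]
Under the hypothesis $3\lambda_6 - \lambda_5 > 14$, the factor $\lambda_5 - 3\lambda_6 + 16 < 2$, so the left-hand side is strictly less than $-4a + 4b$, and it remains to check $-4a + 4b \leq 2a^{2} + 4ab + 4b^{2}$; this rewrites as $(a+b)^{2} + (b-1)^{2} + 2a - 1 \geq 0$, which is elementary for every $(a,b) \in \mathbb{N}_{0}^{2} \setminus \{(0,0)\}$.

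The main obstacle lies in the boundary cases $\lambda_5 < a + b$, where the dominant conjugate of $\lambda - s_{a,b}$ changes form: when $0 \leq \lambda_5 - \tfrac{a}{2} - b < \tfrac{a}{2}$ the smallest absolute value in the first five coordinates is $\lambda_5 - \tfrac{a}{2} - b$ and it migrates into the first slot, while when $\lambda_5 < \tfrac{a}{2} + b$ the fifth coordinate becomes $\tfrac{a}{2} + b - \lambda_5$. A parallel direct computation in each subcase produces an inequality that is even slacker than the one in the generic case, since the additional $\lambda_5$ terms enter favorably, and the hypothesis $3\lambda_6 - \lambda_5 > 14$ again closes the argument by the same quadratic estimate.
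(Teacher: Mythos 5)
Your handling of parts (2) and (3) is exactly what the paper does, and your treatment of the generic case $\lambda_5\geq a+b$ in part (1) is correct and matches the paper's computation (your rearrangement $(a+b)^2+(b-1)^2+2a-1\geq 0$ is a pleasant way to see the quadratic estimate). However, the boundary cases of part (1) are not actually verified, and there are two specific issues there.

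First, the case split is described inaccurately. The dominant conjugate of $\lambda-s_{a,b}$ takes three forms, governed by $\lambda_5\geq a+b$, $b\leq\lambda_5<a+b$, and $\lambda_5<b$. In the middle regime the first coordinate of the dominant conjugate is $\lambda_5-\tfrac{a}{2}-b$ (which may be negative, since $D_5$ allows this) and the fifth coordinate stays $\tfrac{a}{2}$; only when $\lambda_5<b$ does the fifth coordinate become $-\lambda_5+\tfrac{a}{2}+b$. Your threshold $\lambda_5<\tfrac{a}{2}+b$ for the second phenomenon is therefore wrong — you would be applying the wrong dominant conjugate in the range $b\leq\lambda_5<\tfrac{a}{2}+b$.

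Second, and more substantively, the claim that the boundary inequalities are ``even slacker'' and close ``by the same quadratic estimate'' conceals a necessary step. In the boundary subcases, $2\langle\gamma_{a,b},\lambda+\rho\rangle$ acquires extra terms linear in $\lambda_5$ (namely $8\lambda_5$ in the middle case and $16\lambda_5$ in the extreme case, in the paper's normalization), and the factor multiplying $a+2b$ also changes. After applying $3\lambda_6-\lambda_5>14$ you are not done: you must additionally invoke the defining constraint of the subcase ($\lambda_5\leq a+b$, resp.\ $\lambda_5<b$) to absorb these $\lambda_5$ terms, and only then does the reduced quadratic inequality coincide with (resp.\ become slacker than) the generic one. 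This is precisely how the paper closes the argument, arriving at $a^2+2b^2+2ab+2a-2b\geq0$ again for $b\leq\lambda_5\leq a+b$ and at the genuinely slacker $a^2+2b^2+2ab+6a-2b\geq0$ for $\lambda_5<b$. Without making the use of the subcase constraint explicit, the assertion that the boundary cases are ``favorable'' is unjustified, so you should carry out the two remaining computations to the same level of detail as the generic case.
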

	\begin{proof}
		\begin{enumerate}
			\item We have
			$$
			\lambda - s_{a,b} = \left( -\frac{a}{2}, -\frac{a}{2}, -\frac{a}{2}, -\frac{a}{2}, \lambda_5 -\frac{a}{2} - b, \lambda_6 +  \frac{a}{2} + b, \lambda_6 + \frac{a}{2} + b, - \lambda_6 -\frac{a}{2} - b \right ), 
			$$
			and therefore
			\begin{align*}
				(\lambda - s_{a,b})^{+} & = 
				\begin{cases}
					\left(\frac{a}{2}, \frac{a}{2}, \frac{a}{2}, \frac{a}{2}, \lambda_5 - \frac{a}{2} - b,  \lambda_6 + \frac{a}{2} + b,  \lambda_6 + \frac{a}{2} + b, -\lambda_6 -\frac{a}{2} - b \right ), \ \lambda_5 > a + b \\
					\left(\lambda_5 - \frac{a}{2} - b, \frac{a}{2}, \frac{a}{2}, \frac{a}{2},  \frac{a}{2},  \lambda_6 + \frac{a}{2} + b,  \lambda_6 + \frac{a}{2} + b, -\lambda_6 -\frac{a}{2} - b \right ), \ b \leq \lambda_5 \leq a + b\\
					\left(- \frac{a}{2}, \frac{a}{2}, \frac{a}{2}, \frac{a}{2},  -\lambda_5 + \frac{a}{2} + b,  \lambda_6 + \frac{a}{2} + b,  \lambda_6 + \frac{a}{2} + b, -\lambda_6 -\frac{a}{2} - b \right ), \ \lambda_5 < b
				\end{cases} \\
				&  = 	\begin{cases}
					\lambda  - \left(-\frac{a}{2}, -\frac{a}{2}, -\frac{a}{2}, -\frac{a}{2}, \frac{a}{2} + b,  - \frac{a}{2} - b,  - \frac{a}{2} - b, \frac{a}{2} + b \right), \ \lambda_5 > a + b \\
					\lambda  - \left(-\lambda_5+\frac{a}{2} + b, -\frac{a}{2}, -\frac{a}{2}, -\frac{a}{2}, \lambda_5 - \frac{a}{2},  - \frac{a}{2} - b,  - \frac{a}{2} - b, \frac{a}{2} + b \right ), \ b \leq \lambda_5 \leq a + b\\
					\lambda  - \left(\frac{a}{2}, -\frac{a}{2}, -\frac{a}{2}, -\frac{a}{2}, 2\lambda_5-\frac{a}{2} - b,  - \frac{a}{2} - b,  - \frac{a}{2} - b, \frac{a}{2} + b \right ), \ \lambda_5 < b.
				\end{cases}
			\end{align*}
			Then the strict Dirac inequality
			$$
			\|(\lambda - s_{a,b})^{+} + \rho  \|^2 > \| \lambda + \rho\|^2
			$$
			is equivalent to 
			$$
			\begin{cases}
				2a^2 + 4b^2 + 4ab - 10 a - 32 b + (3\lambda_6 - \lambda_5)(a + 2b) > 0, \ \lambda_5 > a + b \\
				2a^2 + 4b^2 + 4ab - 2 a - 24 b + (3\lambda_6 - \lambda_5)(a + 2b) - 8 \lambda_5 > 0, \ b \leq \lambda_5 \leq a + b \\
				2a^2 + 4b^2 + 4ab - 2 a - 16 b + (3\lambda_6 - \lambda_5)(a + 2b) - 16 \lambda_5 > 0, \ \lambda_5 < b
			\end{cases}
			$$
			Since $3\lambda_6 - \lambda_5 > 14$, then $(3\lambda_6 - \lambda_5)(a + 2b) > 14a + 28b$. To prove the strict Dirac inequality it is enough to prove
			$$
			\begin{cases}
				a^2 + 2b^2 + 2ab + 2a - 2b \geq 0, \ \lambda_5 > a + b \\
				a^2 + 2b^2 + 2ab + 2 a - 2 b  \geq 0, \ b \leq \lambda_5 \leq a + b \\
				a^2 + 2b^2 + 2ab + 6a -2 b \geq 0, \ \lambda_5 < b
			\end{cases}.
			$$
			This is true for all $a, b \in \mathbb{N}_{0}, (a, b) \neq (0, 0)$. So the strict Dirac inequality holds for any Schmid module $s_{a,b}$. 
			
			
			\item If $8 < 3\lambda_6 - \lambda_5 < 14$ then 
			$$
			\|(\lambda - s_2)^{+} + \rho  \|^2 < \| \lambda + \rho\|^2.
			$$
			Since $s_1$ is the only Schmid module of strictly lower level than $s_2$, and for $s_1$ we have $ 3\lambda_6 - \lambda_5 > 8$, it follows that 
			$$
			\|(\lambda - s_1)^{+} + \rho  \|^2 > \| \lambda + \rho\|^2.
			$$
			
			\item If $3\lambda_6 - \lambda_5 < 8$ than the basic Dirac inequality obviously fails since in Case 2 the basic Dirac inequality is equivalent to $3 \lambda_6 - \lambda_5 \geq 8$.
		\end{enumerate}
	\end{proof}
	
	\begin{lem}
		\label{second Schmid e6}
		Let $\lambda$ be a highest weight such that $(\lambda_1, \lambda_2 ,\lambda_3, \lambda_4, \lambda_5) \neq (0, 0, 0, 0, 0)$ and
		$$
		\|(\lambda - s_2)^{+} + \rho \|^2 > \| \lambda + \rho \|^2.
		$$
		Then 
		$$
		\|(\lambda' - s_2)^{+} + \rho \|^2 > \| \lambda' + \rho \|^2,
		$$
		where $\lambda' = (\lambda - s_2)^{+}$. If $\lambda_i' = 0$ for $i = 1, 2, 3, 4 ,5$, then 
		$$
		\|(\lambda' - s_{a, b})^{+} + \rho \|^2 > \| \lambda' + \rho \|^2, \quad \forall a,b \in \mathbb{N}_{0}, \quad a + b \neq 0.
		$$
	\end{lem}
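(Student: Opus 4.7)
The plan is to rerun the seven-case analysis from Cases 2.1--2.7 above, this time tracking $\lambda' = \lambda - \gamma_2$ and the case into which $\lambda'$ falls. Two structural facts are immediate from inspecting each case: $\lambda'_6 = \lambda_6 + 1$ in every case, and $\lambda'_5 = \lambda_5 - 1$ in Case 2.1 while $\lambda'_5 = \lambda_5$ in Cases 2.2--2.6. Writing the strict Dirac inequality for $s_2$ at a weight in Case 2.$k$ as $\lambda_5 + c_k < 3\lambda_6$ with
\[
(c_1, c_2, c_3, c_4, c_5, c_6, c_7) = (14, 13, 12, 11, 10, 10, 6),
\]
the strict inequality for $\lambda'$ becomes $\lambda'_5 + c_{k'} < 3\lambda_6 + 3$. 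It is therefore enough to verify, for each transition $k \to k'$, that $c_{k'} \leq c_k + 3$ when $\lambda'_5 = \lambda_5$, and $c_{k'} \leq c_k + 4$ when $\lambda'_5 = \lambda_5 - 1$.

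Enumerating the transitions (Case 2.7 is ruled out by the hypothesis $(\lambda_1, \ldots, \lambda_5) \neq 0$): Case 2.2 $\to$ 2.1, Case 2.3 $\to$ 2.2, Case 2.4 $\to$ 2.3, and Cases 2.5 and 2.6 each land in one of 2.4, 2.5, 2.6, the sub-branching governed by the half-integrality of the common value $|\lambda_1| = \lambda_2$; Case 2.1 may land in any of 2.1--2.7 according to how many of $\lambda_4, \lambda_3, \lambda_2, |\lambda_1|$ coincide with $\lambda_5 - 1$. In every such transition the bound on $c_{k'}$ is immediate from the table of constants above, which proves the first claim.

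For the second assertion, among the six non-excluded cases only Case 2.1 is compatible with $\lambda'_i = 0$ for all $i = 1, \ldots, 5$: in any of Cases 2.2--2.6 the vanishing of all five coordinates of $\lambda'$ would force, via the case-defining equality (e.g.\ $\lambda_5 = \lambda_4$ in Case 2.2, or $\lambda_5 = -\lambda_1$ in Case 2.6), a contradiction of the form $0 = 1$. In Case 2.1 the vanishing pins $\lambda$ down to $(0, 0, 0, 0, 1, \lambda_6, \lambda_6, -\lambda_6)$, and the assumed strict inequality $1 + 14 < 3\lambda_6$ yields $\lambda_6 > 5$. Consequently $\lambda'_6 = \lambda_6 + 1 > 2$, and Theorem~\ref{first five zero e6}(1) applied to $\lambda'$ delivers the strict Dirac inequality for every basic Schmid module $s_{a, b}$ with $a + b > 0$.

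The main obstacle is simply the tabulation of the transitions originating in Case 2.1, where $\lambda'$ may drop into any of the seven subcases; once this is organized, the inequality-chasing is routine since the maximum of the constants is $c_1 = 14$ and $c_1 + 4 = 18$ dominates the entire table, so no individual subcase ever threatens the bound.
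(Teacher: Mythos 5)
Your proof is correct and follows essentially the same approach as the paper: track how $\lambda'_5$ and $\lambda'_6$ shift from $\lambda_5,\lambda_6$, determine the possible case transitions, and verify the numeric bound in each. The only difference is organizational --- you enumerate by the case of $\lambda$ while the paper enumerates by the case of $\lambda'$; both amount to the same finite numeric check, and your summary criterion $c_{k'} \leq c_k + 3$ (or $c_k + 4$ out of Case 2.1) is a clean way to package it.
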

	\begin{proof}
		We have
		$$
		\lambda' = 
		\begin{cases}
			(\lambda_1, \lambda_2, \lambda_3, \lambda_4, \lambda_5 - 1, \lambda_6 + 1, \lambda_6 + 1, -\lambda_6 - 1), \ \lambda \text{ as in case 2.1} \\
			(\lambda_1, \lambda_2, \lambda_3, \lambda_5 - 1, \lambda_5 , \lambda_6 + 1, \lambda_6 + 1, -\lambda_6 - 1), \ \lambda \text{ as in case 2.2} \\
			(\lambda_1, \lambda_2, \lambda_5 - 1, \lambda_5, \lambda_5, \lambda_6 + 1, \lambda_6 + 1, -\lambda_6 - 1), \ \lambda \text{ as in case 2.3} \\
			(\lambda_1, \lambda_5 - 1, \lambda_5, \lambda_5, \lambda_5, \lambda_6 + 1, \lambda_6 + 1, -\lambda_6 - 1), \ \lambda \text{ as in case 2.4} \\
			(\lambda_5 - 1, \lambda_5, \lambda_5, \lambda_5, \lambda_5, \lambda_6 + 1, \lambda_6 + 1, -\lambda_6 - 1), \ \lambda \text{ as in case 2.5} \\
			(-\lambda_5 + 1, \lambda_5, \lambda_5, \lambda_5, \lambda_5, \lambda_6 + 1, \lambda_6 + 1, -\lambda_6 - 1), \ \lambda \text{ as in case 2.6}
		\end{cases}
		$$
		If $\lambda'$ is as in  case 2.1 ($\lambda_5' \neq \lambda_4'$), then $\lambda$ is either as in case 2.1 or as in case 2.2. We have 
		$$\lambda_5' - 3 \lambda_6' = 
		\begin{cases}
			\lambda_5 - 3 \lambda_6 - 4 < -14- 4 = -18, \ \lambda \text{ as in case 2.1} \\ 
			\lambda_5 - 3 \lambda_6 - 3 < -13 - 3 = -16, \ \lambda \text{ as in case 2.2}
		\end{cases}
		$$
		Thus, $\lambda_5' - 3 \lambda_6' < -14$. It follows that the strict Dirac inequality holds for the second basic Schmid module. 
		
		\bigskip
		
		If $\lambda'$ is as in  case 2.2 ($\lambda_5' =  \lambda_4' > \lambda_3'$), then $\lambda$ is either as in case 2.1 or as in case 2.3. We have 
		$$\lambda_5' - 3 \lambda_6' = 
		\begin{cases}
			\lambda_5 - 3 \lambda_6 - 4 < -14- 4 = -18, \ \lambda \text{ as in case 2.1} \\ 
			\lambda_5 - 3 \lambda_6 - 3 < -12 - 3 = -15, \ \lambda \text{ as in case 2.3}
		\end{cases}
		$$
		Thus, $\lambda_5' - 3 \lambda_6' < -13$. It follows that the strict Dirac inequality holds for the second basic Schmid module. 
		
		\bigskip
		
		If $\lambda'$ is as in case 2.3 ($\lambda_5' =  \lambda_4' = \lambda_3' > \lambda_2'$), then $\lambda$ is either as in case 2.1 or as in case 2.4. We have 
		$$\lambda_5' - 3 \lambda_6' = 
		\begin{cases}
			\lambda_5 - 3 \lambda_6 - 4 < -14- 4 = -18, \ \lambda \text{ as in case 2.1} \\ 
			\lambda_5 - 3 \lambda_6 - 3 < -11 - 3 = -14, \ \lambda \text{ as in case 2.4}
		\end{cases}
		$$
		Thus, $\lambda_5' - 3 \lambda_6' < -12$. It follows that the strict Dirac inequality holds for the second basic Schmid module. 
		
		\bigskip
		
		If $\lambda'$ is as in  case 2.4 ($\lambda_5' =  \lambda_4' = \lambda_3' = \lambda_2' > |\lambda_1'|$), then $\lambda$ is either as in case 2.1 or as in case 2.5 or as in case 2.6. We have 
		$$\lambda_5' - 3 \lambda_6' = 
		\begin{cases}
			\lambda_5 - 3 \lambda_6 - 4 < -14- 4 = -18, \ \lambda \text{ as in case 2.1} \\ 
			\lambda_5 - 3 \lambda_6 - 3 < -10 - 3 = -13, \ \lambda \text{ as in case 2.5 or as in case 2.6} 
		\end{cases}
		$$
		Thus, $\lambda_5' - 3 \lambda_6' < -11$. It follows that the strict Dirac inequality holds for the second basic Schmid module. 
		
		\bigskip
		
		If $\lambda'$ is as in  case 2.5 or as in case 2.6 ($\lambda_5' =  \lambda_4' = \lambda_3' = \lambda_2' = |\lambda_1'| > 0$), then $\lambda$ is either as in case 2.1 or as in case 2.5 (for $\lambda_1 = \frac{1}{2}$) or as in case 2.6 (for $\lambda_1 = \frac{1}{2}$). We have 
		$$\lambda_5' - 3 \lambda_6' = 
		\begin{cases}
			\lambda_5 - 3 \lambda_6 - 4 < -14- 4 = -18, \ \lambda \text{ as in case 2.1} \\ 
			\lambda_5 - 3 \lambda_6 - 3 < -10 - 3 = -13, \ \lambda \text{ as in case 2.5 or as in case 2.6}
		\end{cases}
		$$
		Thus, $\lambda_5' - 3 \lambda_6' < -10$. It follows that the strict Dirac inequality holds for the second basic Schmid module. 
		
		\bigskip
		
		If $\lambda'$ is as in  case 2.7 ($\lambda_5' =  \lambda_4' = \lambda_3' = \lambda_2' = \lambda_1'= 0$), then $\lambda = (0, 0, 0, 0, 1, \lambda_6, \lambda_6, - \lambda_6)$ and $1 - 3 \lambda_6 < -14$, that is $\lambda_6 > 5$ and $\lambda_6' = \lambda_6 + 1 > 6 > 2$. The strict Dirac inequality holds for the second basic Schmid module. 
		
		It follows from theorem \ref{first five zero e6} that
		$$
		\|(\lambda' - s_{a,b})^{+} + \rho\|^2 - \|\lambda' + \rho \|^2 > 0 \quad \forall a,b \in \mathbb{N}_{0} \quad a + b \neq 0.
		$$ 
	\end{proof}
	
	\begin{lem}
		\label{first Schmid e6}
		Let $\lambda$ be a highest weight such that $(\lambda_1, \lambda_2 ,\lambda_3, \lambda_4) \neq (0, 0, 0, 0)$ and
		$$
		\|(\lambda - s_1)^{+} + \rho \|^2 > \| \lambda + \rho \|^2.
		$$
		Then 
		$$
		\|(\lambda' - s_1)^{+} + \rho \|^2 > \| \lambda' + \rho \|^2,
		$$
		where $\lambda' = (\lambda - s_1)^{+}$. If $\lambda_i' = 0$ for $i = 1, 2, 3, 4$, then 
		$$
		\|(\lambda' - s_{a, b})^{+} + \rho \|^2 > \| \lambda' + \rho \|^2, \quad \forall a, b \in \mathbb{N}_{0}, \quad a + b \neq 0.
		$$
	\end{lem}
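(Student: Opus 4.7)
The plan is to mirror the proof of Lemma \ref{second Schmid e6}, now tracking the strict Dirac inequality for $s_1$ in place of $s_2$. Since the hypothesis $(\lambda_1,\lambda_2,\lambda_3,\lambda_4) \neq (0,0,0,0)$ excludes cases $1.6$ and $1.8$ from the preceding enumeration, $\lambda$ falls into one of the six cases $1.1, 1.2, 1.3, 1.4, 1.5, 1.7$. In each of these cases $\gamma_1$ has all entries equal to $\pm \frac{1}{2}$, with the last three coordinates fixed at $(-\frac{1}{2},-\frac{1}{2},\frac{1}{2})$; in particular $\lambda_6' = \lambda_6 + \frac{1}{2}$, while the first five coordinates of $\lambda'$ are obtained from those of $\lambda$ by an explicit shift depending on the case.

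For the first assertion, I would organize the argument by the case of $\lambda'$. For each possible case of $\lambda'$ (among $1.1$--$1.7$), I would list which cases of $\lambda$ can produce it and verify the strict Dirac inequality at $\lambda'$, namely
\[
\sum_{i=1}^5 \lambda_i' + c' < 3\lambda_6',
\]
where $c' \in \{12,14,16,18,20\}$ depends on the case of $\lambda'$. The strict hypothesis at $\lambda$ gives $\sum_{i=1}^5 \lambda_i + c < 3\lambda_6$ with an analogous case-dependent constant $c$. Since $3\lambda_6' - 3\lambda_6 = \frac{3}{2}$ and $\sum_{i=1}^5 \lambda_i' - \sum_{i=1}^5 \lambda_i$ is a half-integer computable directly from $\gamma_1$, each subcase reduces to a direct numerical comparison of $c$ and $c'$ accounting for these shifts; in every transition one has enough slack to conclude the strict Dirac inequality at $\lambda'$.

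For the second assertion, the condition $\lambda_i' = 0$ for $i=1,2,3,4$ is very restrictive: a case-by-case inspection shows that it occurs only in two scenarios, namely (a) $\lambda$ in case $1.1$ with $\lambda_1 = \lambda_2 = \lambda_3 = \lambda_4 = \frac{1}{2}$, yielding $\lambda' = (0,0,0,0,\lambda_5-\frac{1}{2},\lambda_6+\frac{1}{2},\lambda_6+\frac{1}{2},-\lambda_6-\frac{1}{2})$; and (b) $\lambda$ in case $1.7$ with $\lambda_2 = \frac{1}{2}$, yielding $\lambda' = (0,0,0,0,1,\lambda_6+\frac{1}{2},\lambda_6+\frac{1}{2},-\lambda_6-\frac{1}{2})$. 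In both scenarios $\lambda'$ falls within the scope of Theorem \ref{first five zero e6} (when $\lambda_5' = 0$) or Theorem \ref{first four zero e6} (when $\lambda_5' > 0$), and the strict Dirac inequality at $\lambda$ translates directly into the required hypothesis $\lambda_6' > 2$ or $3\lambda_6' - \lambda_5' > 14$ of those theorems. The margins are tight---for instance, $\lambda$ in case $1.7$ with $\lambda_2 = \frac{1}{2}$ gives exactly $3\lambda_6 > \frac{27}{2}$, which precisely matches the threshold $3\lambda_6' - \lambda_5' > 14$---but sufficient.

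The main obstacle will be the bookkeeping: each individual arithmetic check is elementary, but the combinatorial enumeration of transitions between cases of $\lambda$ and cases of $\lambda'$ must be carried out exhaustively to ensure no subcase is overlooked, and the tightness of the margin in the second assertion leaves no room for a uniform estimate that would bypass this case split.
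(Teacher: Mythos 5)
Your proposal tracks the paper's proof essentially step for step: both organize the argument by the case of $\lambda' = (\lambda - s_1)^+$, use the shifts $\lambda_6' = \lambda_6 + \tfrac12$ and $\sum\lambda_i' = \sum\lambda_i - \tfrac52$ (for case 1.1) or $\sum\lambda_i - \tfrac12$ (for the other cases) to compare case constants, and handle the situation $(\lambda_1',\ldots,\lambda_4') = (0,0,0,0)$ by invoking Theorems \ref{first five zero e6} and \ref{first four zero e6}. The identification of the two scenarios in your second assertion is correct and is the explicit version of the paper's case-1.6/1.8 discussion, and your observation that the $1.7\to1.6$ transition with $\lambda_2 = \tfrac12$ is the tight one matches the paper's arithmetic.
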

	\begin{proof}
		We have
		$$
		\lambda' = 
		\begin{cases}
			(\lambda_1 - \frac{1}{2}, \lambda_2 - \frac{1}{2}, \lambda_3 - \frac{1}{2}, \lambda_4 - \frac{1}{2}, \lambda_5 - \frac{1}{2}, \lambda_6 + \frac{1}{2}, \lambda_6 + \frac{1}{2}, -\lambda_6 - \frac{1}{2}), \ \lambda \text{ as in case 1.1} \\
			(-\lambda_2 + \frac{1}{2}, \lambda_2 + \frac{1}{2}, \lambda_3 - \frac{1}{2}, \lambda_4 - \frac{1}{2}, \lambda_5 - \frac{1}{2}, \lambda_6 + \frac{1}{2}, \lambda_6 + \frac{1}{2}, -\lambda_6 - \frac{1}{2}), \ \lambda \text{ as in case 1.2} \\
			(-\lambda_2 + \frac{1}{2}, \lambda_2 - \frac{1}{2}, \lambda_2 + \frac{1}{2}, \lambda_4 - \frac{1}{2}, \lambda_5 - \frac{1}{2}, \lambda_6 + \frac{1}{2}, \lambda_6 + \frac{1}{2}, -\lambda_6 - \frac{1}{2}), \ \lambda \text{ as in case 1.3} \\
			( - \frac{1}{2}, \frac{1}{2},  \frac{1}{2}, \lambda_4 - \frac{1}{2}, \lambda_5 - \frac{1}{2}, \lambda_6 + \frac{1}{2}, \lambda_6 + \frac{1}{2}, -\lambda_6 - \frac{1}{2}), \ \lambda \text{ as in case 1.4} \\
			(-\lambda_2 + \frac{1}{2}, \lambda_2 - \frac{1}{2}, \lambda_2 - \frac{1}{2}, \lambda_2 + \frac{1}{2}, \lambda_5 - \frac{1}{2}, \lambda_6 + \frac{1}{2}, \lambda_6 + \frac{1}{2}, -\lambda_6 - \frac{1}{2}), \ \lambda \text{ as in case 1.5} \\
			(-\lambda_2 + \frac{1}{2}, \lambda_2 - \frac{1}{2}, \lambda_2 - \frac{1}{2}, \lambda_2 - \frac{1}{2}, \lambda_2 + \frac{1}{2}, \lambda_6 + \frac{1}{2}, \lambda_6 + \frac{1}{2}, -\lambda_6 - \frac{1}{2}), \ \lambda \text{ as in case 1.7} \\
		\end{cases}
		$$
		If $\lambda'$ is as in  case 1.1 ($\lambda_1' + \lambda_2' \geq 1$), then $\lambda$ is either as in case 1.1 or as in case 1.2. We have 
		$$\sum_{i = 1}^{5} \lambda_i' - 3 \lambda_6' = 
		\begin{cases}
			\sum_{i = 1}^{5} \lambda_i - 3 \lambda_6 - 4 < -20 - 4 = - 24, \ \lambda \text{ as in case 1.1} \\ 
			\sum_{i = 1}^{5} \lambda_i - 3 \lambda_6 - 2 < -18 - 2 = - 20, \ \lambda \text{ as in case 1.2}
		\end{cases}
		$$
		Thus, $\sum_{i = 1}^{5} \lambda_i' - 3 \lambda_6' < - 20$. It follows that the strict basic Dirac inequality holds. 
		
		\bigskip
		
		If $\lambda'$ is as in case 1.2 ($- \lambda_1' = \lambda_2', \lambda_3' - \lambda_2' \geq  1$), then $\lambda$ is either as in case 1.1 or as in case 1.3. We have 
		$$\sum_{i = 1}^{5} \lambda_i' - 3 \lambda_6' = 
		\begin{cases}
			\sum_{i = 1}^{5} \lambda_i - 3 \lambda_6 - 4 < -20 - 4 = - 24, \ \lambda \text{ as in case 1.1} \\ 
			\sum_{i = 1}^{5} \lambda_i - 3 \lambda_6 - 2 < -16 - 2 = - 18, \ \lambda \text{ as in case 1.3}
		\end{cases}
		$$
		Thus, $\sum_{i = 1}^{5} \lambda_i' - 3 \lambda_6' < - 18$. It follows that the strict basic Dirac inequality holds. 
		
		\bigskip
		
		If $\lambda'$ is as  in case 1.3 ($ \lambda_3' = \lambda_2' = - \lambda_1', \lambda_2' > 0, \lambda_4' - \lambda_2' \geq 1$), then $\lambda$ is either as in case 1.1 or as in case 1.4 or as in case 1.5. We have 
		$$\sum_{i = 1}^{5} \lambda_i' - 3 \lambda_6' = 
		\begin{cases}
			\sum_{i = 1}^{5} \lambda_i - 3 \lambda_6 - 4 < -20 - 4 = - 24, \ \lambda \text{ as in case 1.1} \\ 
			\sum_{i = 1}^{5} \lambda_i - 3 \lambda_6 - 2 < -14 - 2 = - 16, \ \lambda \text{ as in case 1.4} \\ 
			\sum_{i = 1}^{5} \lambda_i - 3 \lambda_6 - 2 < -14 - 2 = - 16, \ \lambda \text{ as in case 1.5}
		\end{cases}
		$$
		Thus, $\sum_{i = 1}^{5} \lambda_i' - 3 \lambda_6' < - 16$. It follows that the strict basic Dirac inequality holds.	
		
		\bigskip
		
		If $\lambda'$ is as in case 1.4 ($ \lambda_1' = \lambda_2' =  \lambda_3' = 0, \lambda_4' > 0$), then $\lambda$ is either as in case 1.1 or as in case 1.5. We have 
		$$\sum_{i = 1}^{5} \lambda_i' - 3 \lambda_6' = 
		\begin{cases}
			\sum_{i = 1}^{5} \lambda_i - 3 \lambda_6 - 4 < -20 - 4 = - 24, \ \lambda \text{ as in case 1.1} \\ 
			\sum_{i = 1}^{5} \lambda_i - 3 \lambda_6 - 2 < -14 - 2 = - 16, \ \lambda \text{ as in case 1.5} 
		\end{cases}
		$$
		Thus, $\sum_{i = 1}^{5} \lambda_i' - 3 \lambda_6' < - 14$. It follows that the strict basic Dirac inequality holds.
		
		\bigskip
		
		If $\lambda'$ is as in case 1.5 ($ \lambda_4' = \lambda_3' =  \lambda_2' = - \lambda_1', \lambda_2' > 0, \lambda_5' - \lambda_2' \geq 1$), then $\lambda$ is either as in case 1.1 or as in case 1.4 or as in case 1.7. We have 
		$$\sum_{i = 1}^{5} \lambda_i' - 3 \lambda_6' = 
		\begin{cases}
			\sum_{i = 1}^{5} \lambda_i - 3 \lambda_6 - 4 < -20 - 4 = - 24, \ \lambda \text{ as in case 1.1} \\ 
			\sum_{i = 1}^{5} \lambda_i - 3 \lambda_6 - 2 < -14 - 2 = - 16, \ \lambda \text{ as in case 1.4} \\
			\sum_{i = 1}^{5} \lambda_i - 3 \lambda_6 - 2 < -12 - 2 = - 14, \ \lambda \text{ as in case 1.7}
		\end{cases}
		$$
		Thus, $\sum_{i = 1}^{5} \lambda_i' - 3 \lambda_6' < - 14$. It follows that the strict basic Dirac inequality holds.	
		
		\bigskip
		
		If $\lambda'$ is as in case 1.6 ($ \lambda_4' = \lambda_3' =  \lambda_2' =  \lambda_1' = 0, \lambda_5' - \lambda_2' \geq 1$), then $\lambda$ is either as in case 1.1 or as in case 1.7. We have 
		$$\lambda_5' - 3 \lambda_6' = \sum_{i=1}^{5} \lambda_i' - 3 \lambda_6' = 
		\begin{cases}
			\sum_{i=1}^{5} \lambda_i - 3 \lambda_6 - 4 < -20 - 4 = - 24, \ \lambda \text{ as in case 1.1} \\ 
			\sum_{i = 1}^{5} \lambda_i - 3 \lambda_6 - 2 < -12 - 2 = - 14, \ \lambda \text{ as in case 1.7}
		\end{cases}
		$$
		Thus, $\lambda_5' - 3 \lambda_6' < - 14$. It follows that the strict Dirac inequality for the second basic Schmid module holds and thus, from the proof of theorem \ref{first four zero e6} we have
		$$
		\|(\lambda' - s_{a,b})^{+} + \rho\|^2 - \|\lambda' + \rho \|^2 > 0 \quad \forall a, b \in \mathbb{N}_{0}, \quad  a + b \neq 0.
		$$ 
		
		\bigskip
		
		If $\lambda'$ is as in case 1.7 ($ \lambda_5' = \lambda_4' = \lambda_3' =  \lambda_2' =  -\lambda_1', \lambda_2' > 0$), then $\lambda$ is either as in case 1.1 or as in case 1.4. We have 
		$$\sum_{i=1}^{5} \lambda_i' - 3 \lambda_6' = 
		\begin{cases}
			\sum_{i=1}^{5} \lambda_i - 3 \lambda_6 - 4 < -20 - 4 = - 24, \ \lambda \text{ as in case 1.1} \\ 
			\sum_{i = 1}^{5} \lambda_i - 3 \lambda_6 - 2 < -14 - 2 = - 16, \ \lambda \text{ as in case 1.4}
		\end{cases}
		$$
		Thus, $\sum_{i=1}^{5} \lambda_i' - 3 \lambda_6' < -12$. It follows that the strict basic Dirac inequality holds.
		
		\bigskip
		
		If $\lambda'$ is as in case 1.8 ($ \lambda_5' = \lambda_4' = \lambda_3' =  \lambda_2' =  \lambda_1' = 0$), then $\lambda$ is as in case 1.1. We have 
		$$- 3 \lambda_6' = \sum_{i=1}^{5} \lambda_i' - 3 \lambda_6' = \sum_{i=1}^{5} \lambda_i - 3 \lambda_6 - 4 < -20 - 4 = - 24 
		$$
		
		Thus, $\lambda_6' > 8 > 2$. The strict Dirac inequality holds for the second basic Schmid module. 
		
		It follows from theorem \ref{first five zero e6} that
		$$
		\|(\lambda' - s_{a,b})^{+} + \rho\|^2 - \|\lambda' + \rho \|^2 > 0 \quad \forall a, b \in \mathbb{N}_{0} \quad a + b \neq 0.
		$$ 
	\end{proof}

	\begin{thm} (Case 3) Let $\lambda$ be the highest weight as in Case 3, i.e., $(\lambda_1, \lambda_2, \lambda_3, \lambda_4) \neq (0, 0, 0, 0)$ such that strict basic Dirac inequality holds. Then 
		$$
		\|(\lambda - s_{a,b})^{+} + \rho\|^2 - \| \lambda + \rho\|^2 > 0 \quad  \forall a, b \in \mathbb{N}_{0}, (a, b) \neq (0,0).
		$$
	\end{thm}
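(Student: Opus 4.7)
The plan is a double induction. The overall structure is: first handle the pure-$s_2$ Schmid modules $s_{0,b}$ via an induction on $b$ using only strict $s_2$ Dirac, then reduce the general case $s_{a,b}$ to this by an outer induction on $a$ that peels off one $s_1$ at a time. The generalized PRV estimate of Lemma \ref{gen prv} supplies the crucial lower bound at each reduction step, while Lemmas \ref{first Schmid e6} and \ref{second Schmid e6} provide the dichotomies that keep us inside a regime where either the induction or an earlier theorem applies.

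For the inner step, I claim that for any $\frk$-dominant weight $\mu$ of the form in Table \ref{tab: table2} with $(\mu_1,\ldots,\mu_5)\neq 0$ and strict $s_2$ Dirac, the strict Dirac inequality holds for $s_{0,b}$ at $\mu$ for every $b\geq 1$. Induct on $b$; the base $b=1$ is the hypothesis. For the step, set $\mu'=(\mu-s_2)^+$ and choose $w\in W_\frk$ with $\mu-s_2=w\mu'$. Applying Lemma \ref{gen prv} with $(\mu',\,s_{0,b-1})$,
\[
\|(\mu-s_{0,b})^++\rho\|^2=\|(w\mu'-s_{0,b-1})^++\rho\|^2\geq\|(\mu'-s_{0,b-1})^++\rho\|^2.
\]
Lemma \ref{second Schmid e6} now gives two sub-cases: if $\mu'_i=0$ for $i=1,\ldots,5$, its second conclusion furnishes the strict Dirac inequality at $\mu'$ for every Schmid module, in particular for $s_{0,b-1}$; otherwise $(\mu'_1,\ldots,\mu'_5)\neq 0$ and $\mu'$ still satisfies strict $s_2$ Dirac, so the inner induction hypothesis applies. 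Either way, combined with the strict $s_2$ Dirac at $\mu$ (which yields $\|\mu'+\rho\|^2>\|\mu+\rho\|^2$), this closes the inner induction.

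For the outer induction on $a$, the base $a=0$ is handled by the inner step applied with $\mu=\lambda$: a Case 3 weight satisfies $(\lambda_1,\ldots,\lambda_5)\neq 0$ and, as noted in the case analysis preceding the theorem (via $\lambda_5\leq\sum_{i=1}^5\lambda_i$ in Cases 1.1--1.5 and 1.7), strict $s_2$ Dirac follows from strict $s_1$ Dirac. For $a\geq 1$, decompose $s_{a,b}=s_1+s_{a-1,b}$, set $\lambda'=(\lambda-s_1)^+$, and apply PRV exactly as above to obtain
\[
\|(\lambda-s_{a,b})^++\rho\|^2\geq\|(\lambda'-s_{a-1,b})^++\rho\|^2.
\]
Lemma \ref{first Schmid e6} supplies the analogous dichotomy: either $(\lambda'_1,\ldots,\lambda'_4)=0$, so the strict Dirac inequality holds at $\lambda'$ for every Schmid module, or $\lambda'$ satisfies strict $s_1$ Dirac and hence lies again in Case 3, so the outer induction hypothesis at level $a-1$ applies. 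The strict $s_1$ Dirac at $\lambda$ provides $\|\lambda'+\rho\|^2>\|\lambda+\rho\|^2$ to finish the outer step.

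The main subtlety is precisely the need for the decoupled inner induction. Peeling $s_2$ from a Case 3 weight preserves only strict $s_2$ Dirac at the peeled weight (by Lemma \ref{second Schmid e6}) but not necessarily strict $s_1$ Dirac, so one cannot iterate using Case 3 as the invariant hypothesis once $s_2$-peeling has begun; handling the pure-$s_2$ direction with its own induction, which requires only strict $s_2$ Dirac, is exactly what bypasses this obstruction. The remaining routine checks are the $\frk$-dominance of $s_{a-1,b}$ and $s_{0,b-1}$ as weights of the form in Table \ref{tab: table2}, which is immediate from their explicit coordinates.
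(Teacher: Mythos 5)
Your proposal is correct and follows essentially the same strategy as the paper: an inner induction on $b$ peeling off $s_2$'s (with Lemma \ref{second Schmid e6} providing both the propagation of strict $s_2$ Dirac and the exit when the first five coordinates vanish), followed by an outer induction on $a$ peeling off $s_1$'s (with Lemma \ref{first Schmid e6} playing the analogous role), tied together by the PRV estimate of Lemma \ref{gen prv}. Your remark about the decoupling of the inner induction — that $s_2$-peeling preserves only strict $s_2$ Dirac, not membership in Case~3, which is why the inner claim must be stated under the weaker hypothesis — is exactly the point that makes the argument work and is implicit in the paper's use of Lemma \ref{second Schmid e6} under the hypothesis $(\lambda_1,\dots,\lambda_5)\neq 0$ rather than $(\lambda_1,\dots,\lambda_4)\neq 0$.
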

	
	\begin{proof}
		Let $\lambda$ be as in Case $3$, and let us assume that the strict basic Dirac inequality holds. First we will prove that in this case we have
		\begin{equation}\label{b}
			\|(\lambda - s_{0,b})^{+} + \rho\|^2 - \|\lambda + \rho \|^2 > 0 \quad \forall b \in \mathbb{N}.
		\end{equation}
		Let us denote $\lambda' = (\lambda - s_2)^{+}$. We have already proved that if $\lambda$ is in Case 3 and the strict basic Dirac inequality holds, then the strict Dirac inequality also holds for $s_2$. So we have 
		$$
		\| \lambda' + \rho\|^2 > \| \lambda + \rho\|^2.
		$$
		Let us assume that $b > 1$. Let $w\in W_{\mathfrak{k}}$ be such that $\lambda - s_2 = w(\lambda - s_2)^{+}$.  From Lemma \ref{gen prv} we have
		\begin{align*}
			\|(\lambda - s_{0,b})^{+} + \rho\|^2 & = \|(\lambda - s_2  - s_{0,b-1})^{+} + \rho\|^2 
			= \| (w(\lambda - s_2)^{+} -  s_{0, b-1})^{+} + \rho \|^2 \\
			& \geq \|((\lambda - s_2)^{+} - s_{0,b-1})^{+} + \rho\|^2 =  \|(\lambda' - s_{0,b-1})^{+} + \rho\|^2.
		\end{align*}
		It follows from the last two inequalities that
		\begin{equation}\label{e6b}
			\|(\lambda - s_{0,b})^{+} + \rho\|^2 - \| \lambda + \rho\|^2 > \|(\lambda' - s_{0,b-1})^{+} + \rho\|^2 - \| \lambda' + \rho\|^2 \quad \forall b > 1
		\end{equation}
		If $\lambda_i' = 0$ for $i = 1, 2, 3, 4 ,5$, then
		it follows from lemma \ref{second Schmid e6} that
		$$
		\|(\lambda' - s_{0, b-1})^{+} + \rho \|^2 > \| \lambda' + \rho \|^2, \quad \forall b > 1,
		$$
		and it follows from \eqref{e6b} that
		$$
		\|(\lambda - s_{0, b})^{+} + \rho \|^2 - \| \lambda + \rho \|^2 > 0, \quad \forall b > 1.
		$$
		Since $\|\lambda' + \rho\|^2 > \| \lambda + \rho \|^2$, we have
		$$
		\|(\lambda - s_{0, b})^{+} + \rho \|^2 - \| \lambda + \rho \|^2 > 0, \quad \forall b \in \mathbb{N}.
		$$
		If $(\lambda_1', \lambda_2', \lambda_3', \lambda_4', \lambda_5' ) \neq (0,0,0,0,0)$ and if $b > 2$ then it follows from lemma \ref{second Schmid e6} and from \eqref{e6b} that
		$$
		\|(\lambda' - s_{0,b-1})^{+} + \rho\|^2 - \| \lambda' + \rho\|^2 > \|(\lambda'' - s_{0,b-2})^{+} + \rho\|^2 - \| \lambda'' + \rho\|^2, 
		$$
		where $\lambda'' = (\lambda' - s_2)^{+}$. By induction, it follows
		$$
		\|(\lambda - s_{0,b})^{+} + \rho\|^2 - \| \lambda + \rho\|^2 > 0 \quad  \forall b \in \mathbb{N}.
		$$
		
		Now we will prove that if $\lambda$ is as in Case 3, and the strict basic Dirac inequality holds, then 
		$$
		\|(\lambda - s_{a,b})^{+} + \rho\|^2 - \|\lambda + \rho \|^2 > 0 \quad \forall a, b \in \mathbb{N}_{0}, (a, b) \neq (0, 0).
		$$ 
		Let us denote $\tilde{\lambda} = (\lambda - s_1)^{+}$. We have 
		$$
		\| \tilde{\lambda} + \rho\|^2 > \| \lambda + \rho\|^2.
		$$
		Let us assume that $a > 1$ or $a = 1, \ b > 0$. Let $\tilde{w}\in W_{\mathfrak{k}}$ be such that $\lambda - s_1 = \tilde{w}(\lambda - s_1)^{+}$.  It follows from Lemma \ref{gen prv} that
		\begin{align*}
			\|(\lambda - s_{a,b})^{+} + \rho\|^2 & = \|(\lambda - s_1  - s_{a - 1, b})^{+} + \rho\|^2 
			= \| (\tilde{w}(\lambda - s_1)^{+} -  s_{a - 1, b})^{+} + \rho \|^2 \\
			& \geq \|((\lambda - s_1)^{+} - s_{a-1,b})^{+} + \rho\|^2 =  \|(\tilde{\lambda} - s_{a - 1,b})^{+} + \rho\|^2.
		\end{align*}
		It follows from the last two inequalities that
		\begin{equation}\label{e6ab}
			\|(\lambda - s_{a,b})^{+} + \rho\|^2 - \| \lambda + \rho\|^2 > \|(\tilde{\lambda} - s_{a - 1,b})^{+} + \rho\|^2 - \| \tilde{\lambda} + \rho\|^2.
		\end{equation}
		If $\tilde{\lambda}_i = 0$ for $i = 1, 2, 3, 4$, then it follows
		from lemma \ref{first Schmid e6} that
		$$
		\|(\tilde{\lambda} - s_{a - 1, b})^{+} + \rho \|^2 > \| \tilde{\lambda} + \rho \|^2,
		$$
		and it follows from \eqref{e6ab} that
		$$
		\|(\lambda - s_{a, b})^{+} + \rho \|^2 - \| \lambda + \rho \|^2 > 0 \quad \forall a, b \in \mathbb{N}_{0}, \ a + b \neq 0.
		$$
		If $(\tilde{\lambda}_1, \tilde{\lambda}_2, \tilde{\lambda}_3, \tilde{\lambda}_4) \neq (0,0,0,0)$ and $a > 1$, then  it follows from lemma \ref{first Schmid e6} and from \eqref{e6ab} that
		$$
		\|(\tilde{\lambda} - s_{a - 1,b})^{+} + \rho\|^2 - \| \tilde{\lambda} + \rho\|^2 > \|(\bar{\lambda} - s_{a - 2,b})^{+} + \rho\|^2 - \| \bar{\lambda} + \rho\|^2, 
		$$
		where $\bar{\lambda} = (\tilde{\lambda} - s_1)^{+}$. By induction and by \eqref{b}, it follows that
		$$
		\|(\lambda - s_{a,b})^{+} + \rho\|^2 - \| \lambda + \rho\|^2 > 0 \quad  \forall a, b \in \mathbb{N}_{0}, (a, b) \neq (0,0).
		$$
	\end{proof}
	
	\subsection{Dirac inequality for $\mathfrak{e}_7$} 
	The basic Schmid $\mathfrak{k}$--modules in $S(\p^{-})$ have lowest weights $-s_i$, $i = 1, 2, 3$, where
	\begin{align*}
		s_1 & = \beta_1 =  \left( 0, 0, 0, 0, 0, 0, -1, 1 \right), \\
		s_2 & = \beta_1 + \beta_2 = \left( 0, 0, 0, 0, 1, 1, -1, 1 \right), \\
		s_3 & = \beta_1 + \beta_2  + \beta_3= \left( 0, 0, 0, 0, 0, 2, -1, 1 \right).
	\end{align*}
	The highest weight $(\mathfrak{g} ,K)$--modules have highest weight of the form
	\begin{align*}
		\lambda  & = (\lambda_1, \lambda_2, \lambda_3, \lambda_4, \lambda_5, \lambda_6, \lambda_7, - \lambda_7), \quad  |\lambda_1| \leq \lambda_2 \leq \lambda_3 \leq \lambda_4 \leq \lambda_5, \\
		& \lambda_i - \lambda_j \in \mathbb{Z}, \ 2 \lambda_i \in \mathbb{Z}, \ 1 \leq i \leq j \leq 5\\
		& \frac{1}{2} \left( \lambda_8 - \lambda_7 - \lambda_6 + \sum_{i = 1}^{5} (-1)^{n(i)} \lambda_i \right) \in \mathbb{N}_{0}, \quad \sum_{n = 1}^{5} n(i) \text{ even}, 
	\end{align*}
	which can be written more shortly as
	\begin{align*}
		& \lambda  = (\lambda_1, \lambda_2, \lambda_3, \lambda_4, \lambda_5, \lambda_6, \lambda_7, - \lambda_7), \quad  |\lambda_1| \leq \lambda_2 \leq \lambda_3 \leq \lambda_4 \leq \lambda_5, \\
		& \lambda_i - \lambda_j \in \mathbb{Z}, \ 2 \lambda_i \in \mathbb{Z}, \ 1 \leq i \leq j \leq 5\\
		& \frac{1}{2} \left( \lambda_8 - \lambda_7 - \lambda_6 -\lambda_5 -\lambda_4 -\lambda_3 - \lambda_2 + \lambda_1  \right) \in \mathbb{N}_{0}.
	\end{align*}
	In this case
	$$
	\rho = \left( 0, 1 , 2, 3, 4, 5, -\frac{17}{2}, \frac{17}{2} \right ).
	$$
	The basic necessary condition for unitarity is the Dirac inequality
	$$
	||(\lambda - s_1)^{+} + \rho||^2 \geq ||\lambda + \rho||^2.
	$$
	As before, we write $(\lambda - s_1)^{+} = \lambda - \gamma_1$. Then the Dirac inequality is equivalent to 
	$$
	2 \left < \gamma_1 \, , \, \lambda + \rho \right > \leq \|\gamma_1\|^2.
	$$
	
	We have 
	\begin{align*}
		\lambda - s_1 & = \left (\lambda_1, \lambda_2, \lambda_3, \lambda_4, \lambda_5, \lambda_6, \lambda_7 + 1, - \lambda_7 - 1 \right ) \\
		\lambda  + \rho & = \left (\lambda_1, \lambda_2 + 1, \lambda_3 + 2, \lambda_4 + 3, \lambda_5 + 4, \lambda_6 + 5, \lambda_7 - \frac{17}{2}, -\lambda_7 +  \frac{17}{2} \right )
	\end{align*}
	There are two basic cases. 
	
	\bigskip
	
	\textbf{Case 1.1: $\frac{1}{2} \left( \lambda_8 - \lambda_7 - \lambda_6 -\lambda_5 -\lambda_4 -\lambda_3 - \lambda_2 + \lambda_1 \right) \geq 1$.} In this case $\gamma_1 = s_1$. The basic inequality is equivalent to
	$$
	\lambda_7 \geq 8.
	$$
	\textbf{Case 1.2: $\frac{1}{2} \left( \lambda_8 - \lambda_7 - \lambda_6 -\lambda_5 -\lambda_4 -\lambda_3 - \lambda_2 + \lambda_1 \right) = 0$.} We have
	\begin{align*}
		& s_{\alpha_1 } \left (\lambda_1, \lambda_2, \lambda_3, \lambda_4, \lambda_5, \lambda_6, \lambda_7 + 1, - \lambda_7 - 1 \right ) \\
		& = \left (\lambda_1 + \frac{1}{2}, \lambda_2 - \frac{1}{2}, \lambda_3 - \frac{1}{2}, \lambda_4 - \frac{1}{2}, \lambda_5 - \frac{1}{2}, \lambda_6 - \frac{1}{2}, \lambda_7 + \frac{1}{2}, - \lambda_7 - \frac{1}{2} \right ) .
	\end{align*}

	In this case we have eight subcases.
	
	\bigskip
	
	\textbf{Case 1.2.1: $\frac{1}{2} \left( \lambda_8 - \lambda_7 - \lambda_6 -\lambda_5 -\lambda_4 -\lambda_3 - \lambda_2 + \lambda_1 \right) = 0, \ \lambda_1 < \lambda_2 $.}
	
	\bigskip
	
	In this case 
	$$(\lambda - s_1)^{+} = \left (\lambda_1 + \frac{1}{2}, \lambda_2 - \frac{1}{2}, \lambda_3 - \frac{1}{2}, \lambda_4 - \frac{1}{2}, \lambda_5 - \frac{1}{2}, \lambda_6 - \frac{1}{2}, \lambda_7 + \frac{1}{2}, - \lambda_7 - \frac{1}{2} \right )$$ and $\gamma_1 = \frac{1}{2}(-1, 1, 1, 1, 1, 1, -1, 1)$. The basic inequality is equivalent to
	$$
	\lambda_7 \geq \frac{15}{2}.
	$$
	\textbf{Case 1.2.2: $\frac{1}{2} \left( \lambda_8 - \lambda_7 - \lambda_6 -\lambda_5 -\lambda_4 -\lambda_3 - \lambda_2 + \lambda_1 \right) = 0, \ \lambda_1 = \lambda_2 < \lambda_3$.}  
	
	\bigskip
	
	In this case $$(\lambda - s_1)^{+} = \left (\lambda_2 -  \frac{1}{2}, \lambda_2 + \frac{1}{2}, \lambda_3 - \frac{1}{2}, \lambda_4 - \frac{1}{2}, \lambda_5 - \frac{1}{2}, \lambda_6 - \frac{1}{2}, \lambda_7 + \frac{1}{2}, - \lambda_7 - \frac{1}{2} \right )$$ and $\gamma_1 = \frac{1}{2}(1, -1, 1, 1, 1, 1, -1, 1)$. The basic inequality is equivalent to
	$$
	\lambda_7 \geq 7.
	$$
	\textbf{Case 1.2.3: $\frac{1}{2} \left( \lambda_8 - \lambda_7 - \lambda_6 -\lambda_5 -\lambda_4 -\lambda_3 - \lambda_2 + \lambda_1 \right) = 0, \ 0 < \lambda_1 = \lambda_2 = \lambda_3 < \lambda_4$.}  
	
	\bigskip
	
	In this case $$(\lambda - s_1)^{+} = \left (\lambda_3 -  \frac{1}{2}, \lambda_3 - \frac{1}{2}, \lambda_3 + \frac{1}{2}, \lambda_4 - \frac{1}{2}, \lambda_5 - \frac{1}{2}, \lambda_6 - \frac{1}{2}, \lambda_7 + \frac{1}{2}, - \lambda_7 - \frac{1}{2} \right )$$ and $\gamma_1 = \frac{1}{2}(1, 1, -1, 1, 1, 1, -1, 1)$. The basic inequality is equivalent to
	$$
	\lambda_7 \geq \frac{13}{2}.
	$$
	\textbf{Case 1.2.4: $\frac{1}{2} \left( \lambda_8 - \lambda_7 - \lambda_6 -\lambda_5 -\lambda_4 -\lambda_3 - \lambda_2 + \lambda_1 \right) = 0, \ 0 = \lambda_1 = \lambda_2 = \lambda_3 < \lambda_4$.}  
	
	\bigskip
	
	In this case $$(\lambda - s_1)^{+} = \left (\frac{1}{2}, \frac{1}{2}, \frac{1}{2}, \lambda_4 - \frac{1}{2}, \lambda_5 - \frac{1}{2}, \lambda_6 - \frac{1}{2}, \lambda_7 + \frac{1}{2}, - \lambda_7 - \frac{1}{2} \right )$$ and $\gamma_1 = \frac{1}{2}(-1, -1, -1, 1, 1, 1, -1, 1)$. The basic inequality is equivalent to
	$$
	\lambda_7 \geq 6.
	$$
	\textbf{Case 1.2.5: $\frac{1}{2} \left( \lambda_8 - \lambda_7 - \lambda_6 -\lambda_5 -\lambda_4 -\lambda_3 - \lambda_2 + \lambda_1 \right) = 0, \ 0 < \lambda_1 = \lambda_2 = \lambda_3 = \lambda_4 < \lambda_5$.}  
	
	\bigskip
	
	In this case $$(\lambda - s_1)^{+} = \left (\lambda_4 -  \frac{1}{2}, \lambda_4 - \frac{1}{2}, \lambda_4 - \frac{1}{2}, \lambda_4 + \frac{1}{2}, \lambda_5 - \frac{1}{2}, \lambda_6 - \frac{1}{2}, \lambda_7 + \frac{1}{2}, - \lambda_7 - \frac{1}{2} \right )$$ and $\gamma_1 = \frac{1}{2}(1, 1, 1, -1, 1, 1, -1, 1)$. The basic inequality is equivalent to
	$$
	\lambda_7 \geq 6.
	$$
	\textbf{Case 1.2.6: $\frac{1}{2} \left( \lambda_8 - \lambda_7 - \lambda_6 -\lambda_5 -\lambda_4 -\lambda_3 - \lambda_2 + \lambda_1 \right) = 0, \ 0 = \lambda_1 = \lambda_2 = \lambda_3 = \lambda_4 < \lambda_5$.}  
	We have
	\begin{align*}
		s_{\alpha_1}  s_{\eps_2 - \eps_1} s_{\eps_{3} + \eps_4} & \left ( \frac{1}{2}, -\frac{1}{2}, -\frac{1}{2}, -\frac{1}{2}, \lambda_5 - \frac{1}{2}, \lambda_6 - \frac{1}{2}, \lambda_7 + \frac{1}{2}, - \lambda_7 - \frac{1}{2} \right ) \\
		= & \left (0, 0, 0, 0, \lambda_5 - 1, \lambda_6 - 1, \lambda_7, - \lambda_7 \right ) .
	\end{align*}
	
	In this case $$(\lambda - s_1)^{+} = \left (0, 0, 0, 0, \lambda_5 - 1, \lambda_6 - 1, \lambda_7, - \lambda_7 \right )$$ and $\gamma_1 = (0, 0, 0, 0, 1, 1, 0, 0)$. The basic inequality is equivalent to
	$$
	\lambda_7 \geq 4.
	$$
	\textbf{Case 1.2.7: $\frac{1}{2} \left( \lambda_8 - \lambda_7 - \lambda_6 -\lambda_5 -\lambda_4 -\lambda_3 - \lambda_2 + \lambda_1 \right) = 0, \ 0 < \lambda_1 = \lambda_2 = \lambda_3 = \lambda_4 = \lambda_5$.}

	In this case $$(\lambda - s_1)^{+} = \left (\lambda_5 -  \frac{1}{2}, \lambda_5 - \frac{1}{2}, \lambda_5 - \frac{1}{2}, \lambda_5 - \frac{1}{2}, \lambda_5 + \frac{1}{2}, \lambda_6 - \frac{1}{2}, \lambda_7 + \frac{1}{2}, - \lambda_7 - \frac{1}{2} \right )$$ and $\gamma_1 = \frac{1}{2}(1, 1, 1, 1,-1, 1, -1, 1)$. The basic inequality is equivalent to
	$$
	\lambda_7 \geq \frac{11}{2}.
	$$
	\textbf{Case 1.2.8: $\frac{1}{2} \left( \lambda_8 - \lambda_7 - \lambda_6 -\lambda_5 -\lambda_4 -\lambda_3 - \lambda_2 + \lambda_1 \right) = 0, \ 0 = \lambda_1 = \lambda_2 = \lambda_3 = \lambda_4 = \lambda_5$.}  
	We have
	\begin{align*}
		s_{\eps_5 - \eps_1} s_{\alpha_1}  s_{\eps_2 + \eps_3} s_{\eps_{4} + \eps_5} & \left ( \frac{1}{2}, -\frac{1}{2}, -\frac{1}{2}, -\frac{1}{2}, - \frac{1}{2}, \lambda_6 - \frac{1}{2}, \lambda_7 + \frac{1}{2}, - \lambda_7 - \frac{1}{2} \right ) \\
		= & \left (0, 0, 0, 0, 1, \lambda_6 - 1, \lambda_7, - \lambda_7 \right ) .
	\end{align*}
	
	In this case $$(\lambda - s_1)^{+} = \left (0, 0, 0, 0, 1, \lambda_6 - 1, \lambda_7, - \lambda_7 \right )$$ and $\gamma_1 = (0, 0, 0, 0, -1, 1, 0, 0)$. The basic inequality is equivalent to
	$$
	\lambda_7 \geq 0.
	$$
	
	\bigskip
	
	Now we are going to see in which cases the Dirac inequality holds for $s_2$. We have 
	$$
	\lambda - s_2 = (\lambda_1, \lambda_2, \lambda_3, \lambda_4, \lambda_5 - 1, \lambda_6 - 1, \lambda_7 + 1, -\lambda_7 -1).
	$$
	We write $(\lambda - s_2)^{+} = \lambda - \gamma_2$. Then the Dirac inequality for $s_2$
	$$
	\|(\lambda - s_2)^{+} + \rho \|^2 \geq \|\lambda + \rho \|^2
	$$
	is equivalent to
	$$
	2\left < \gamma_2, \lambda + \rho \right > \leq \| \gamma_2 \|^2
	$$
	There are seven cases. 
	
	\textbf{Case 2.1: $\lambda_5 > \lambda_4$.} In this case $\gamma_2 = s_2$. The Dirac inequality for $s_2$ is equivalent to
	$$
	\lambda_5  + \lambda_6 - 2 \lambda_7 + 24 \leq 0.
	$$
	\textbf{Case 2.2: $\lambda_5 = \lambda_4 > \lambda_3$.} In this case $$(\lambda - s_2)^{+} = \left (\lambda_1, \lambda_2, \lambda_3, \lambda_5 - 1, \lambda_5, \lambda_6 - 1, \lambda_7 + 1, - \lambda_7 -1 \right )$$ and $\gamma_2 = (0, 0, 0, 1, 0, 1, -1, 1)$. The Dirac inequality for $s_2$ is equivalent to
	$$
	\lambda_5  + \lambda_6 - 2 \lambda_7 + 23 \leq 0.
	$$
	\textbf{Case 2.3: $\lambda_5 = \lambda_4 = \lambda_3 > \lambda_2$.}  In this case $$(\lambda - s_2)^{+} = \left (\lambda_1, \lambda_2, \lambda_5 - 1, \lambda_5, \lambda_5, \lambda_6 - 1, \lambda_7 + 1, - \lambda_7 -1 \right )$$ and  $\gamma_2 = (0, 0, 1, 0, 0, 1, -1, 1)$. The Dirac inequality for $s_2$ is equivalent to
	$$
	\lambda_5  + \lambda_6 - 2 \lambda_7 + 22 \leq 0.
	$$
	\textbf{Case 2.4: $\lambda_5 = \lambda_4 = \lambda_3 = \lambda_2 > | \lambda_1 |$.}  In this case $$(\lambda - s_2)^{+} = \left (\lambda_1, \lambda_5 - 1, \lambda_5, \lambda_5, \lambda_5, \lambda_6 - 1, \lambda_7 + 1, - \lambda_7 -1 \right )$$ and $\gamma_2 = (0, 1, 0, 0, 0, 1, -1, 1)$. The Dirac inequality for $s_2$ is equivalent to
	$$
	\lambda_5  + \lambda_6 - 2 \lambda_7 + 21 \leq 0.
	$$
	\textbf{Case 2.5: $\lambda_5 = \lambda_4 = \lambda_3 = \lambda_2 = \lambda_1 > 0$.} We have two subcases: 
	
	\textbf{Case 2.5.1: $\lambda_5 = \lambda_4 = \lambda_3 = \lambda_2 = \lambda_1 > 0, \frac{1}{2} \left( \lambda_1 - \lambda_2 - \lambda_3 - \lambda_4 - \lambda_5 - \lambda_6 - \lambda_7 + \lambda_8 \right ) \geq 1$.}
	
	\bigskip
	
	In this case $$(\lambda - s_2)^{+} = \left (\lambda_5 - 1, \lambda_5, \lambda_5, \lambda_5, \lambda_5, \lambda_6 - 1, \lambda_7 + 1, - \lambda_7 -1 \right )$$ and $\gamma_2 = (1, 0, 0, 0, 0, 1, -1, 1)$. The Dirac inequality for $s_2$ is equivalent to
	$$
	\lambda_5  + \lambda_6 - 2 \lambda_7 + 20 \leq 0.
	$$
	\textbf{Case 2.5.2: $\lambda_5 = \lambda_4 = \lambda_3 = \lambda_2 = \lambda_1 > 0, \frac{1}{2} \left( \lambda_1 - \lambda_2 - \lambda_3 - \lambda_4 - \lambda_5 - \lambda_6 - \lambda_7 + \lambda_8 \right ) = 0$.}
	We have 
	\begin{align*}
		&  s_{\alpha_1} \left (\lambda_5 - 1, \lambda_5, \lambda_5, \lambda_5, \lambda_5, \lambda_6 - 1, \lambda_7 + 1, - \lambda_7 -1 \right ) \\
		& = \left (\lambda_5 - \frac{1}{2}, \lambda_5 - \frac{1}{2}, \lambda_5 - \frac{1}{2}, \lambda_5 - \frac{1}{2}, \lambda_5 - \frac{1}{2}, \lambda_6 - \frac{3}{2}, \lambda_7 + \frac{1}{2}, - \lambda_7 - \frac{1}{2} \right ) 
	\end{align*}
	In this case $$(\lambda - s_2)^{+} = \left (\lambda_5 - \frac{1}{2}, \lambda_5 - \frac{1}{2}, \lambda_5 - \frac{1}{2}, \lambda_5 - \frac{1}{2}, \lambda_5 - \frac{1}{2}, \lambda_6 - \frac{3}{2}, \lambda_7 + \frac{1}{2}, - \lambda_7 - \frac{1}{2} \right )$$ and $\gamma_2 = \frac{1}{2}(1, 1, 1, 1, 1, 3, -1, 1)$. The Dirac inequality for $s_2$ is equivalent to
	$$
	\lambda_5  + \lambda_6 - 2 \lambda_7 + 19 \leq 0.
	$$
	\textbf{Case 2.6: $\lambda_5 = \lambda_4 = \lambda_3 = \lambda_2 = -\lambda_1 > 0$.} In this case $$(\lambda - s_2)^{+} = \left (-\lambda_5 + 1, \lambda_5, \lambda_5, \lambda_5, \lambda_5, \lambda_6 - 1, \lambda_7 + 1, - \lambda_7 -1 \right )$$ and $\gamma_2 = (-1, 0, 0, 0, 0, 1, -1, 1)$. The Dirac inequality for $s_2$ is equivalent to
	$$
	\lambda_5  + \lambda_6 - 2 \lambda_7 + 20 \leq 0.
	$$
	\textbf{Case 2.7: $\lambda_5 = \lambda_4 = \lambda_3 = \lambda_2 = \lambda_1 = 0$.}  
	We have two subcases: 
	
	\bigskip
	
	\textbf{Case 2.7.1: $\lambda_5 = \lambda_4 = \lambda_3 = \lambda_2 = \lambda_1 = 0, \frac{1}{2} \left( \lambda_1 - \lambda_2 - \lambda_3 - \lambda_4 - \lambda_5 - \lambda_6 - \lambda_7 + \lambda_8 \right ) \geq 1$.}
	
	In this case $$(\lambda - s_2)^{+} = \left (0, 0, 0, 0, 1, \lambda_6 - 1, \lambda_7 + 1, - \lambda_7 -1 \right )$$ and $\gamma_2 = (0, 0, 0, 0, -1, 1, -1, 1)$. The Dirac inequality for $s_2$ is equivalent to
	$$
	\lambda_5  + \lambda_6 - 2 \lambda_7 + 16 \leq 0.
	$$
	\textbf{Case 2.7.2: $\lambda_5 = \lambda_4 = \lambda_3 = \lambda_2 = \lambda_1 = 0, \frac{1}{2} \left( \lambda_1 - \lambda_2 - \lambda_3 - \lambda_4 - \lambda_5 - \lambda_6 - \lambda_7 + \lambda_8 \right ) = 0$.}
	We have 
	\begin{align*}
		& s_{\eps_5 - \eps_4} s_{\eps_4 + \eps_5}\left (0, 0, 0, 0, -1, \lambda_6 - 1, \lambda_7 + 1, - \lambda_7 -1 \right ) = \left (0, 0, 0, 0, 1, \lambda_6 - 1, \lambda_7 + 1, - \lambda_7 -1 \right ) \\
		& s_{\alpha_1} s_{\eps_2 - \eps_1} s_{\eps_3 + \eps_4} s_{\alpha_1} \left (0, 0, 0, 0, 1, \lambda_6 - 1, \lambda_7 + 1, - \lambda_7 -1 \right ) \\
		& = \left( 0, 0, 0, 0, 0, \lambda_6 - 2, \lambda_7, - \lambda_7 \right).
	\end{align*}
	In this case $$(\lambda - s_2)^{+} = \left( 0, 0, 0, 0, 0, \lambda_6 - 2, \lambda_7, - \lambda_7 \right)$$ and $\gamma_2 = (0, 0, 0, 0, 0, 2, 0, 0)$. The Dirac inequality for $s_2$ is equivalent to
	$$
	\lambda_5  + \lambda_6 - 2 \lambda_7 + 8 \leq 0,
	$$
	i.e. $\lambda_7 \geq 2$.

	Now we are going to see in which cases the Dirac inequality holds for $s_3$. We have 
	$$
	\lambda - s_3 = (\lambda_1, \lambda_2, \lambda_3, \lambda_4, \lambda_5, \lambda_6 - 2, \lambda_7 + 1, -\lambda_7 -1),
	$$
	and therefore $(\lambda - s_3)^{+} = \lambda - s_3$. Then the Dirac inequality for $s_3$
	$$
	\|(\lambda - s_3)^{+} + \rho \|^2 \geq \|\lambda + \rho \|^2
	$$
	is equivalent to
	$$
	2\left < s_3, \lambda + \rho \right > \leq \| s_3 \|^2,
	$$
	i.e.,
	$$
	\lambda_6 - \lambda_7 + 12 \leq 0.
	$$
	It is easy to see that in cases $1.1, 1.2.1, 1.2.2, 1.2.3, 1.2.4, 1.2.5$ or $1.2.7$ if the Dirac inequality holds for $s_1$ then it also holds for $s_2$. Let us assume that the Dirac inequality holds for $s_1$. We have
	$$
	\lambda_5 + \lambda_6 \leq \lambda_1 - \lambda_2 - \lambda_3 - \lambda_4 - 2 \lambda_7 \leq -2 \lambda_7,$$ i.e. $$\lambda_5 + \lambda_6 - 2 \lambda_7 \leq -4 \lambda_7 \leq (-4) \cdot \frac{11}{2} = -22,
	$$ 
	and therefore the Dirac inequality obviously holds for $s_2$ if $\lambda$ is in one of the cases 2.3, 2.4, 2.5, 2.6 or 2.7. If $\lambda$ is in case 2.1 or in case 2.2 and also in one of the cases $1.1, 1.2.1, 1.2.2, 1.2.3, 1.2.4$ or $1.2.5$ (if $\lambda$ is in case 2.1 or 2.2, then $\lambda$ can not be in case $1.2.7$) and the Dirac inequality holds for $s_1$ then $\lambda_7 \geq 6$ and therefore 
	$$
	\lambda_5 + \lambda_6 - 2 \lambda_7 \leq -4 \lambda_7 \leq (-4) \cdot 6 = -24,
	$$
	so the Dirac inequality holds for $s_2$.
	
	\bigskip
	
	Furthermore, in cases $1.1, 1.2.1, 1.2.2, 1.2.3, 1.2.4, 1.2.5$ or $1.2.7$  if the Dirac inequality holds for $s_1$ then it also holds for $s_3$, since $\lambda_6 \leq \lambda_1 - \lambda_2 - \lambda_3 - \lambda_4 -\lambda_5 -  2 \lambda_7 \leq -2 \lambda_7$ and therefore $$\lambda_6 -  \lambda_7 + 12 \leq -3 \lambda_7 + 12 \leq (-3) \cdot \frac{11}{2} + 12 < 0.$$
	
	Therefore, we have three basic cases:
	\bigskip
	
	\textbf{Case 1:} $\lambda_ i = 0, \ i \in \{1, 2, 3, 4, 5\}, \lambda_6 = - 2 \lambda_7$ (case 1.2.8) 
	
	In this case the basic Dirac inequality can be written as
	$$
	\lambda_7 \geq 0.
	$$
	The Dirac inequality for the second basic Schmid module is equivalent to
	$$
	\lambda_7 \geq 2.
	$$
	The Dirac inequality for the third basic Schmid module is equivalent to
	$$
	\lambda_7 \geq 4.
	$$
	It is clear that if the Dirac inequality holds for the third basic Schmid module, then it automatically holds for the first and the second basic Schmid module.
	
	\bigskip
	
	\textbf{Case 2:} $\lambda_ i = 0, \ i \in \{1, 2, 3, 4\}, \  \lambda_5 > 0, \ - \lambda_5 - \lambda_6 - 2 \lambda_7 = 0$ (case 1.2.6)
	
	In this case the basic Dirac inequality can be written as
	$$
	\lambda_7 \geq 4.
	$$
	The Dirac inequality for the second basic Schmid module is equivalent to
	$$
	\lambda_7 \geq 6.
	$$
	The Dirac inequality for the third basic  Schmid module is equivalent to
	$$
	\lambda_6 - \lambda_7 + 12 \leq 0.
	$$
	If the Dirac inequality holds for the second basic Schmid module, then it automatically holds for the first and the third basic Schmid module, since
	$$
	\lambda_6 - \lambda_7 + 12 = - \lambda_5 - 3 \lambda_7 + 12 \leq  - 3 \lambda_7 + 12 \leq -18 + 12 < 0.
	$$
	
	\textbf{Case 3:} $\lambda$ is of type $1.1, 1.2.1, 1.2.2, 1.2.3, 1.2.4, 1.2.5$ or $1.2.7$. The Dirac inequality for the second and the third Schmid module is automatically satisfied if the basic Dirac inequality holds.
	
	Let 
	$$
	s_{a, b, c} = a s_1 + b s_2 + c s_3 = \left( 0, 0, 0, 0, b, b + 2c, -a - b - c, a + b + c\right ), \ a, b, c \in \mathbb{N}_{0}, \  a + b + c > 0
	$$
	be a general Schmid module.
	
	\begin{thm}(Case 1)
		\label{first five zero e7}
		Let $\lambda$ be the highest weight of the form $\lambda = (0, 0, 0, 0, 0, -2\lambda_7, \lambda_7, - \lambda_7)$.
		\begin{enumerate}
			\item If  $\lambda_7 > 4$ then $\lambda$ satisfies the strict Dirac inequality for any Schmid module $s_{a, b, c}$, i.e.
			$$
			\|(\lambda - s_{a,b,c})^{+} + \rho  \|^2 > \| \lambda + \rho\|^2, \ a, b, c \in \mathbb{N}_{0}, \ (a,b,c) \neq (0, 0, 0)
			$$
			\item If $2 < \lambda_7 < 4$ then 
			$$
			\|(\lambda - s_3)^{+} + \rho  \|^2 < \| \lambda + \rho\|^2
			$$
			and the strict Dirac inequality holds for any Schmid module of strictly lower level than $s_3$.
			\item If $0 < \lambda_7 < 2$ then 
			$$
			\|(\lambda - s_2)^{+} + \rho  \|^2 < \| \lambda + \rho\|^2
			$$
			and the strict Dirac inequality holds for any Schmid module of strictly lower level than $s_2$.
			\item If $\lambda_7 < 0$ than the basic Dirac inequality fails.
		\end{enumerate}
	\end{thm}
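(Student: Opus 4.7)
The plan is to imitate the proof of Theorem \ref{first five zero e6}, adapting to the extra basic Schmid direction and the fact that $W_\frk$ now has type $E_6$. The decisive simplification is that $\lambda = \lambda_7(0,0,0,0,0,-2,1,-1)$ is orthogonal to every compact root once the constraint $\lambda_8=-\lambda_7$ is imposed, hence lies in the centre of $\frk$ and is fixed by every $w\in W_\frk$. Consequently $(\lambda-s)^+ = \lambda + (-s)^+$ for every $s$, and since $\langle(-s)^+,\lambda\rangle = \langle -s,\lambda\rangle$, the Dirac difference depends on $s$ only through $\|s\|^2$, $\langle s,\lambda\rangle$ and $\langle (-s)^+,\rho\rangle$.

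The main step is to establish the closed form
\[
(-s_{a,b,c})^+ = (0,0,0,0,\,a,\,-a-2b-2c,\,c,\,-c).
\]
The right-hand side is manifestly dominant in the sense of Table \ref{tab: table2}: its first five coordinates are non-decreasing from $0$ up to $a \ge 0$, and the half-integrality condition on the last three coordinates evaluates to $b \in \mathbb{N}_0$. Its squared norm is $2a^2+4b^2+6c^2+4ab+4ac+8bc = \|s_{a,b,c}\|^2$. Orbit-membership under $W_\frk$ can be verified by extending the explicit reflection sequences already written out for the basic Schmid modules in Cases 1.2.6, 1.2.8, 2.5.2 and 2.7.2, or alternatively by induction on the level $a+2b+3c$. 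This is the one step that genuinely requires care; everything downstream is routine.

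Substituting this formula and expanding, the Dirac difference becomes
\begin{align*}
P(a,b,c) &:= \|(\lambda-s_{a,b,c})^+ +\rho\|^2 - \|\lambda+\rho\|^2 \\
&= 2a^2+4b^2+6c^2+4ab+4ac+8bc + (4\lambda_7-2)a + (8\lambda_7-20)b + (12\lambda_7-54)c.
\end{align*}
Because the cross terms $4ab+4ac+8bc$ are non-negative, $P(a,b,c) \ge P(a,0,0) + P(0,b,0) + P(0,0,c)$; when $\lambda_7>4$ each of the three univariate summands is $\ge 0$ on $\mathbb{N}_0$ and strictly positive when the variable is $\ge 1$, which gives part~(1). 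For the remaining parts the computation is direct: $P(0,0,1) = 12\lambda_7-48 < 0$ when $\lambda_7<4$, whereas $P(1,0,0)=4\lambda_7$, $P(2,0,0)=8\lambda_7+4$ and $P(0,1,0)=8\lambda_7-16$ are all positive when $\lambda_7>2$; since the Schmid modules of level strictly less than the level $3$ of $s_3$ are exactly $s_1$, $2s_1$ and $s_2$, this yields part~(2). Part~(3) follows from $P(0,1,0)<0$ and $P(1,0,0)>0$ when $0<\lambda_7<2$, and part~(4) from $P(1,0,0)=4\lambda_7<0$ when $\lambda_7<0$.
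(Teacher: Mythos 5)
Your proposal is correct and follows essentially the same path as the paper's proof. The observation that $\lambda=(0,0,0,0,0,-2\lambda_7,\lambda_7,-\lambda_7)$ is orthogonal to all compact roots (hence $W_\frk$-fixed), so that $(\lambda-s)^+ = \lambda + (-s)^+$, is a nice structural remark that the paper leaves implicit; but the real work it buys you is exactly the real work the paper does, namely verifying
\[
(-s_{a,b,c})^+ = (0,0,0,0,a,-a-2b-2c,c,-c),
\]
which the paper establishes by exhibiting an explicit reflection word
$s_{\eps_5-\eps_1}s_{\alpha_1}s_{\eps_4-\eps_1}s_{\eps_4+\eps_5}s_{\eps_2+\eps_3}s_{\alpha_1}s_{\eps_5-\eps_4}s_{\eps_4+\eps_5}$
applied to $\lambda-s_{a,b,c}$. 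You correctly flag this as the step that ``genuinely requires care'' and defer it; matching norms plus dominance of the right-hand side is not by itself sufficient (dominant weights of equal norm need not be $W_\frk$-conjugate), so some explicit orbit argument --- the paper's reflection word, or your proposed induction on level --- is unavoidable. Once that formula is in hand, your quadratic
\[
P(a,b,c)=2a^2+4b^2+6c^2+4ab+4ac+8bc+(4\lambda_7-2)a+(8\lambda_7-20)b+(12\lambda_7-54)c
\]
is the same as the paper's inequality
$2(-2\lambda_7(a+2b+3c)+a+10b+27c)<a^2+(a+2b+2c)^2+2c^2$,
and your decomposition $P(a,b,c)=P(a,0,0)+P(0,b,0)+P(0,0,c)+\text{(nonnegative cross terms)}$ is a marginally cleaner way to organize the positivity check for $\lambda_7>4$ than the paper's bound $-2\lambda_7(a+2b+3c)<-8(a+2b+3c)$. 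Your treatment of parts (2)--(4) via $P(1,0,0)$, $P(2,0,0)$, $P(0,1,0)$, $P(0,0,1)$ agrees with the paper's case-by-case check of the low-level Schmid modules. In short: same key identity, same inequality, mildly more transparent bookkeeping, one acknowledged deferred verification.
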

	
	\begin{proof}
		\begin{enumerate}
			\item We have
			\begin{align*}
				\lambda - s_{a, b, c}  & = \left( 0, 0, 0, 0, -b, -2\lambda_7 - b - 2c, \lambda_7  + a + b + c, -\lambda_7 - a - b - c \right ) \\
				& s_{\eps_5 - \eps_1}s_{\alpha_1} s_{\eps_4 - \eps_1} s_{\eps_4 + \eps_5}s_{\eps_2 + \eps_3}s_{\alpha_1}s_{\eps_5 - \eps_4} s_{\eps_4 + \eps_5}(\lambda - s_{a, b, c}) \\  
				&  = \left(0, 0, 0, 0, a, -2 \lambda_7 - a - 2b - 2c, \lambda_7 + c, -\lambda_7 - c \right)
			\end{align*}
			and therefore
			\begin{align*}
				(\lambda- s_{a,b, c})^{+} & = \left(0, 0, 0, 0, a, -2 \lambda_7 - a - 2b - 2c, \lambda_7 + c, -\lambda_7 - c \right) \\
				&  = \lambda -  \left( 0, 0, 0, 0, -a, a + 2b + 2c, -c, c \right ). 
			\end{align*}
			
			Then the strict Dirac inequality
			$$
			\|(\lambda - s_{a,b,c})^{+} + \rho  \|^2 > \| \lambda + \rho\|^2
			$$
			is equivalent to 
			$$
			2 \left <\gamma_{a,b,c} \, , \, \lambda +\rho \right > < ||\gamma_{a,b,c}||^2,
			$$
			where $\gamma_{a,b,c} = \left( 0, 0, 0, 0, -a, a + 2b + 2c, -c, c \right )$
			and this inequality is equivalent to
			$$
			2(-2 \lambda_7(a + 2b + 3c) + a + 10b + 27c) < a^2 + (a + 2b + 2c)^2 + 2 c^2.
			$$
			Since $\lambda_7 > 4$,  $-2 \lambda_7(a + 2b + 3c) < -8(a + 2b + 3c)$. We see that the inequality
			$$
			2(-8(a + 2b + 3c) + a + 10b + 27c) \leq a^2 + (a + 2b + 2c)^2 + 2 c^2
			$$
			holds for all $a, b, c \in \mathbb{N}_{0}, a + b + c \neq 0$. 
			So the strict Dirac inequality holds for any Schmid module $s_{a, b, c}$.

			
			\item If $2 < \lambda_7 < 4$ then 
			$$
			\|(\lambda - s_3)^{+} + \rho  \|^2 < \| \lambda + \rho\|^2.
			$$
			Since the level of $s_i$ is equal to $i$ where $i \in \{1, 2, 3\}$, and the level of $a s_1 + b s_2 + c s_3$ is equal to $a + 2b + 3c$, the only Schmid modules of strictly lower level than $s_3$ are $s_1, s_2$ and $2 s_1$. For $s_i, i \in \{1, 2\}$, we have $\lambda_7 > 2 > 0$, i.e.
			$$
			\|(\lambda - s_i)^{+} + \rho  \|^2 > \| \lambda + \rho\|^2.
			$$
			We have $(\lambda - 2s_1)^{+} = \lambda - (0, 0, 0, 0, -2, 2, 0, 0)$. Therefore, the strict Dirac inequality  for $2 s_1$ is equivalent to $\lambda_7 > - \frac{1}{2}$, which is true since $2 < \lambda_7 < 4$. 
			
			
			\item If $0 < \lambda_7 < 2$ then 
			$$
			\|(\lambda - s_2)^{+} + \rho  \|^2 < \| \lambda + \rho\|^2.
			$$
			Since the level of $s_2$ is equal to $2$ and the level of $a s_1 + b s_2 + c s_3$ is equal to $a + 2b + 3c$, the only Schmid module of strictly lower level than $s_2$ is $s_1$. For $s_1$ we have $\lambda_7 > 0$, which implies
			$$
			\|(\lambda - s_1)^{+} + \rho  \|^2 > \| \lambda + \rho\|^2.
			$$
			
			\item If $\lambda_7 < 0$ than the basic Dirac inequality obviously fails since in Case 1 the basic Dirac inequality is equivalent to $\lambda_7 \geq 0$.
		\end{enumerate}
	\end{proof}

	\begin{thm}(Case 2)
		\label{first four zero e7}
		Let $\lambda$ be the highest weight of the form $\lambda = (0, 0, 0, 0, \lambda_5 , \lambda_6, \lambda_7, - \lambda_7)$ such that $\lambda_5 > 0$ and $ -\lambda_5 - \lambda_6 - 2 \lambda_7 = 0$.
		\begin{enumerate}
			\item If $\lambda_7 > 6$ than $\lambda$ satisfies the strict Dirac inequality for any Schmid module $s_{a, b, c}$, i.e.
			$$
			\|(\lambda - s_{a,b,c})^{+} + \rho  \|^2 > \| \lambda + \rho\|^2, \ a, b, c \in \mathbb{N}_{0}, \ (a,b,c) \neq (0, 0, 0)
			$$
			\item If $4 < \lambda_7 < 6$ then 
			$$
			\|(\lambda - s_2)^{+} + \rho  \|^2 < \| \lambda + \rho\|^2
			$$
			and the strict Dirac inequality holds strictly for any Schmid module of strictly lower level than $s_2$.
			\item If $\lambda_7 < 4$ than the basic Dirac inequality fails.
		\end{enumerate}
	\end{thm}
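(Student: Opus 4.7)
The plan is to follow the template of Theorem \ref{first five zero e7}, treating parts (3), (2), (1) in order of increasing difficulty. Under the hypothesis $\lambda_1=\cdots=\lambda_4=0$, $\lambda_5>0$, $-\lambda_5-\lambda_6-2\lambda_7=0$ the weight $\lambda$ lies in the regime already analyzed as Case~1.2.6 for $s_1$, Case~2.1 for $s_2$, and the uniform case for $s_3$, so the per-Schmid-module Dirac inequalities take the explicit forms $\lambda_7\geq 4$, $\lambda_7\geq 6$, and $\lambda_5+3\lambda_7\geq 12$ respectively (using $\lambda_6=-\lambda_5-2\lambda_7$). Part (3) is then immediate: the basic Dirac inequality in Case~2 was shown to be $\lambda_7\geq 4$, so $\lambda_7<4$ violates it.

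For part (2), I would first verify from the explicit formula above that $4<\lambda_7<6$ forces the strict failure $\|(\lambda-s_2)^++\rho\|^2<\|\lambda+\rho\|^2$. The only Schmid module of level strictly less than $\mathrm{level}(s_2)=2$ is $s_1$, for which $\lambda_7>4$ implies the strict inequality. This handles (2) with essentially no new computation.

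The real work is part (1). I would compute
\[
\lambda - s_{a,b,c} = (0,0,0,0,\lambda_5 - b,\, -\lambda_5 - 2\lambda_7 - b - 2c,\, \lambda_7 + a + b + c,\, -\lambda_7 - a - b - c),
\]
then apply a product of generators of $W_\frk$ from Table \ref{tab:table1}, namely the signed permutations $s_{\eps_i\pm\eps_j}$ with $5\geq i>j$ together with the long reflection $s_{\alpha_1}$, to reduce to dominant form $(\lambda-s_{a,b,c})^+=\lambda-\gamma_{a,b,c}$. Writing the strict Dirac inequality as $2\langle\gamma_{a,b,c},\lambda+\rho\rangle<\|\gamma_{a,b,c}\|^2$, I would use $\lambda_7>6$ to bound the linear terms. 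In the regime where $b\leq\lambda_5$, the same sequence of reflections used in the proof of Theorem \ref{first five zero e7} for Case~1 applies and produces a dominant conjugate analogous to $(0,0,0,0,a,-2\lambda_7-a-2b-2c,\lambda_7+c,-\lambda_7-c)$ modified by the presence of $\lambda_5-b\geq 0$; the resulting quadratic inequality is easily closed because $\lambda_7>6$ yields $-2\lambda_7(a+2b+3c)<-12(a+2b+3c)$, which dominates the remaining linear slack.

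The main obstacle will be the subcase $b>\lambda_5$, where the fifth coordinate of $\lambda-s_{a,b,c}$ becomes negative and comparable in magnitude to the sixth, requiring extra $W_\frk$-reflections (pairs of sign changes, plus $s_{\alpha_1}$) to reach the dominant form; the bookkeeping proliferates because $\lambda_5$ is a free parameter. I expect the inequality still to close uniformly in $\lambda_5$ thanks to the $\lambda_7$-linear term, but the explicit verification will require carefully enumerating subcases according to the relative sizes of $b$, $\lambda_5$, and $2c$, mirroring the multi-case structure that appears for $s_2$ in the preamble to the theorem. Alternatively, I may first prove the inequality when $a=0$ via a reduction lemma analogous to Lemma \ref{second Schmid e6} using Lemma \ref{gen prv}, and then iterate $s_1$ via a lemma analogous to Lemma \ref{first Schmid e6} to handle general $(a,b,c)$, which is the strategy that succeeds in the $\mathfrak{e}_6$ Case~3.
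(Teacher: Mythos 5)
Your parts (2) and (3) are complete and coincide with the paper's treatment, and your plan for part (1) is the right one, but you stop at a sketch without carrying out the reduction to dominant form or verifying the resulting inequality, so as written the argument is incomplete. The good news is that the bookkeeping you worry about does not proliferate. The paper applies the single word $w = s_{\alpha_1}\,s_{\eps_3+\eps_4}\,s_{\eps_2-\eps_1}\,s_{\alpha_1}$ to $\lambda - s_{a,b,c}$, which already brings the fifth and sixth coordinates to $\lambda_5 - a - b$ and $\lambda_6 - a - b - 2c$ and moves $a$ to the $\alpha_1$-slot; after substituting $\lambda_6 = -\lambda_5 - 2\lambda_7$, the dominance test for the $\alpha_1$-direction depends only on the sign of $b - \lambda_5$, so the relative size of $2c$ plays no role. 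There are then exactly three subcases, $\lambda_5 > a+b$, $b \leq \lambda_5 \leq a+b$, and $\lambda_5 < b$ (in the last, $w$ is applied a second time). Writing $(\lambda - s_{a,b,c})^+ = \lambda - \gamma_{a,b,c}$, using $\lambda_7 > 6$ to bound $-2\lambda_7(a+2b+3c)$, and using $\lambda_5 \geq 0$ together with $8\lambda_5 \leq 8(a+b)$ respectively $16\lambda_5 \leq 16b$ in the second and third subcases, the strict Dirac inequality reduces to $-3a + 2b - 9c \leq (a+b)^2 + 2(a+b)c + 2c^2 + (b+c)^2$ in the first two subcases and $-11a + 2b - 9c \leq a^2 + 2a(b+c) + c^2 + 2(b+c)^2$ in the third, both of which hold for all $a,b,c \in \mathbb{N}_0$ with $(a,b,c) \neq (0,0,0)$. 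Your proposed fallback via a reduction lemma and Lemma \ref{gen prv} is not needed here; that machinery is what the paper reserves for the $\mathfrak{e}_7$ Case~3 theorem, where $\lambda_5$ is not pinned down.
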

	\begin{proof}
		\begin{enumerate}
			\item We have
			\begin{align*}
				& \lambda - s_{a, b, c}  = \left( 0, 0, 0, 0, \lambda_5 -b, \lambda_6 - b - 2c, \lambda_7  + a + b + c, -\lambda_7 - a - b - c \right ) \\
				& s_{\alpha_1} s_{\eps_3 + \eps_4} s_{\eps_2 - \eps_1}s_{\alpha_1}(\lambda - s_{a, b, c})  
				= \left(0, 0, 0, 0, \lambda_5 - a - b, \lambda_6 - a - b - 2c, \lambda_7 + b + c, -\lambda_7 -b - c \right)
			\end{align*}
			and therefore
			\begin{align*}
				(\lambda - s_{a,b, c})^{+} & = 
				\begin{cases}
					\left(0, 0, 0, 0, \lambda_5 - a - b, \lambda_6 - a - b - 2c, \lambda_7 + b + c, -\lambda_7 -b - c \right), \ \lambda_5 > a + b \\
					\left(0, 0, 0, 0, -\lambda_5 + a + b, \lambda_6 - a - b - 2c, \lambda_7 + b + c, -\lambda_7 -b - c \right), \ b \leq \lambda_5 \leq a + b\\
					s_{\alpha_1} s_{\eps_3 + \eps_4} s_{\eps_2 - \eps_1}s_{\alpha_1}\left(0, 0, 0, 0, -\lambda_5 + a + b, \lambda_6 - a - b - 2c, \ \lambda_7 + b + c, -\lambda_7 -b - c \right) \\
					=\left(0, 0, 0, 0, a, \lambda_5 + \lambda_6 - a - 2b - 2c, \lambda_5 + \lambda_7 + c, - \lambda_5 - \lambda_7 - c \right ), \ \lambda_5 < b
				\end{cases} \\
				&  = 	\begin{cases}
					\lambda  - \left(0, 0, 0, 0,  a + b,  a + b + 2c, - b - c, b + c \right), \ \lambda_5 > a + b \\
					\lambda  - \left(0, 0, 0, 0,  2 \lambda_5 - a- b,  a + b + 2c, - b - c, b + c \right), \ b \leq \lambda_5 \leq a + b\\
					\lambda  - \left(0, 0, 0, 0,  \lambda_5 - a, - \lambda_5 + a + 2b + 2c, - \lambda_5 - c, \lambda_5 + c \right), \ \lambda_5 < b.
				\end{cases}
			\end{align*}
			Then the strict Dirac inequality
			$$
			\|(\lambda - s_{a,b, c})^{+} + \rho  \|^2 > \| \lambda + \rho\|^2
			$$
			is equivalent to 
			$$
			\begin{cases}
				-2 \lambda_5 c - 2 \lambda_7(a + 2b + 3c) + 9a + 26b + 27 c < (a + b)^2 + 2(a + b)c +  2c^2 + (b + c)^2, \ \lambda_5 > a + b \\
				-2 \lambda_5 c - 2 \lambda_7(a + 2b + 3c) + 8 \lambda_5 +  a + 18b + 27 c < (a + b)^2 + 2(a + b)c + 2c^2 + (b + c)^2, \ b \leq \lambda_5 \leq a + b \\
				-2 \lambda_5 c - 2 \lambda_7(a + 2b + 3c) + 16 \lambda_5 +  a + 10b + 27 c < a^2 + 2a(b + c) + c^2 + 2(b + c)^2, \ \lambda_5 < b
			\end{cases}
			$$
			Let us assume that $\lambda_7 > 6$. Since $\lambda_5 \geq 0$, to prove the strict Dirac inequality it is enough to prove
			$$
			\begin{cases}
				-3a + 2b - 9c \leq (a + b)^2 + 2(a + b)c +  2c^2 + (b + c)^2, \ \lambda_5 > a + b \\
				-3a + 2b - 9c \leq  (a + b)^2 + 2(a + b)c + 2c^2 + (b + c)^2, \ b \leq \lambda_5 \leq a + b \\
				-11a + 2b - 9c \leq  a^2 + 2a(b + c) + c^2 + 2(b + c)^2, \ \lambda_5 < b
			\end{cases}.
			$$
			This is true for all $a, b, c \in \mathbb{N}_{0}, (a, b, c) \neq (0, 0, 0)$. So the strict Dirac inequality holds for any Schmid module $s_{a,b, c}$.

			
			\item If $4 < \lambda_7 < 6$ then 
			$$
			\|(\lambda - s_2)^{+} + \rho  \|^2 < \| \lambda + \rho\|^2.
			$$
			Since $s_1$ is the only Schmid module of strictly lower level than $s_2$ and for $s_1$ we have $ \lambda_7 > 4$, it follows that 
			$$
			\|(\lambda - s_1)^{+} + \rho  \|^2 > \| \lambda + \rho\|^2.
			$$
			
			\item If $\lambda_7 < 4$ than the basic Dirac inequality obviously fails since in Case 2 the basic Dirac inequality is equivalent to $\lambda_7 \geq 4$.
		\end{enumerate}
	\end{proof}
	
	\begin{lem}
		\label{third Schmid e7}
		Let $\lambda$ be a highest weight such that
		$$
		\|(\lambda - s_3)^{+} + \rho \|^2 > \| \lambda + \rho \|^2.
		$$
		Then 
		$$
		\|(\lambda' - s_3)^{+} + \rho \|^2 > \| \lambda' + \rho \|^2,
		$$
		where $\lambda' = (\lambda - s_3)^{+}$.
	\end{lem}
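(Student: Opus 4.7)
The plan is to exploit the fact that $s_3 = (0,0,0,0,0,2,-1,1)$ only modifies the last three coordinates of $\lambda$, so that $\lambda - s_3$ is automatically $\frk$-dominant and hence $(\lambda - s_3)^+ = \lambda - s_3$. This reduces the lemma to a one-line numerical comparison involving the quantity $\lambda_6 - \lambda_7 + 12$.

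First I would verify that $(\lambda - s_3)^+ = \lambda - s_3$, which is already recorded in the case analysis preceding the lemma. The ordering $|\lambda_1| \leq \lambda_2 \leq \cdots \leq \lambda_5$ persists because those coordinates are untouched, and the $\alpha_1$-dominance condition $\frac{1}{2}(\lambda_8 - \sum_{i=2}^{7}\lambda_i + \lambda_1) \in \mathbb{N}_0$ is invariant under subtraction of $s_3$: the $+2$ contribution at coordinate $6$ is exactly cancelled by the $-2$ coming from $\lambda_8 - \lambda_7$. Consequently
\[
\lambda' = \lambda - s_3 = (\lambda_1, \lambda_2, \lambda_3, \lambda_4, \lambda_5, \lambda_6 - 2, \lambda_7 + 1, -\lambda_7 - 1)
\]
is again a valid highest weight as in Table \ref{tab: table2}, and in particular the formula $(\lambda' - s_3)^+ = \lambda' - s_3$ is available for the same reason.

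Next I would invoke the computation preceding the lemma, by which the hypothesis $\|(\lambda - s_3)^+ + \rho\|^2 > \|\lambda + \rho\|^2$ is equivalent to the strict inequality $\lambda_6 - \lambda_7 + 12 < 0$. Since $\lambda'_6 = \lambda_6 - 2$ and $\lambda'_7 = \lambda_7 + 1$, one computes
\[
\lambda'_6 - \lambda'_7 + 12 = (\lambda_6 - \lambda_7 + 12) - 3 < 0,
\]
and applying the same criterion to $\lambda'$ yields the desired strict inequality $\|(\lambda' - s_3)^+ + \rho\|^2 > \|\lambda' + \rho\|^2$. There is no real obstacle: the entire proof is the observation that each subtraction of $s_3$ strictly decreases the linear form $\lambda_6 - \lambda_7 + 12$ by $3$ while preserving $\frk$-dominance, so the Dirac inequality for $s_3$ only becomes stricter upon iteration.
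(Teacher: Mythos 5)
Your proof is correct and is essentially the paper's own argument: both observe that $(\lambda - s_3)^+ = \lambda - s_3$ since $s_3$ leaves the first five coordinates unchanged, translate the hypothesis into $\lambda_6 - \lambda_7 + 12 < 0$, and note that this linear form drops by $3$ when $s_3$ is subtracted. The extra verification you include that $\lambda'$ is again a valid highest weight (ordering and $\alpha_1$-dominance preserved) is a worthwhile detail that the paper leaves implicit.
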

	
	\begin{proof}
		We have $$\lambda' = (\lambda - s_3)^{+} = \lambda - s_3 = (\lambda_1, \lambda_2, \lambda_3, \lambda_4, \lambda_5, \lambda_6 - 2, \lambda_7 + 1, -\lambda_7 - 1 ).$$ The strict Dirac inequality
		$$
		\|(\lambda' - s_3)^{+} + \rho \|^2 > \| \lambda' + \rho \|^2
		$$
		is equivalent to 
		$$
		\lambda_6' - \lambda_7' + 12 < 0
		$$
		and this is 
		equivalent to 
		$$
		\lambda_6 - \lambda_7 + 9 < 0,
		$$
		which is true since 
		$$
		\lambda_6 - \lambda_7 + 12 < 0.
		$$
	\end{proof}
	
	\begin{lem}\label{bs20}
		Let $\lambda$ be a highest weight such that $(\lambda_1, \lambda_2 ,\lambda_3, \lambda_4, \lambda_5) = (0, 0, 0, 0, 0)$ and
		$$
		\|(\lambda - s_2)^{+} + \rho \|^2 > \| \lambda + \rho \|^2.
		$$
		Then 
		$$
		\|(\lambda - s_{0, b, 0})^{+} + \rho \|^2 > \| \lambda + \rho \|^2 \quad \forall b \in \mathbb{N}.
		$$
	\end{lem}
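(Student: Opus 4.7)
Plan. The proof is a direct computation, split into two cases according to the form of $\lambda$. Since $(\lambda_1,\ldots,\lambda_5)=(0,\ldots,0)$ and $\frac{1}{2}(-\lambda_6-2\lambda_7)\in\mathbb{N}_0$, either $\lambda_6=-2\lambda_7$ (the shape in case 2.7.2 above) or $m:=\frac{1}{2}(-\lambda_6-2\lambda_7)\geq 1$ (the shape in case 2.7.1). The strict Dirac hypothesis for $s_2$ translates to $\lambda_7>2$ in the first situation and to $2\lambda_7-\lambda_6>16$ in the second.

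In case 2.7.2, running on $\lambda-s_{0,b,0}=(0,0,0,0,-b,-2\lambda_7-b,\lambda_7+b,-\lambda_7-b)$ the same chain of Weyl reflections used in the proof of Theorem \ref{first five zero e7} for the general Schmid module $s_{a,b,c}$ produces the $\frk$-dominant form
\[
(\lambda-s_{0,b,0})^+ = (0,0,0,0,0,-2\lambda_7-2b,\lambda_7,-\lambda_7),
\]
and a direct expansion gives
\[
\|(\lambda-s_{0,b,0})^+ +\rho\|^2 - \|\lambda+\rho\|^2 = 4b(b+2\lambda_7-5),
\]
strictly positive for every $b\geq 1$ since $\lambda_7>2$.

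In case 2.7.1 the dominant form depends on whether $b\leq m$ or $b>m$. For $b\leq m$, the composite $s_{\eps_5-\eps_4}s_{\eps_5+\eps_4}$ alone suffices: the resulting vector pairs with the half-sum compact root $\alpha_1$ as $m-b\geq 0$, and one obtains
\[
(\lambda-s_{0,b,0})^+ = (0,0,0,0,b,\lambda_6-b,\lambda_7+b,-\lambda_7-b);
\]
the norm difference then evaluates to $2b(2\lambda_7-\lambda_6+2b-18)$, which is strictly positive because $2\lambda_7-\lambda_6>16$. For $b>m$ one additionally applies $s_{\alpha_1}$, then a $W(D_5)$-rearrangement of the first five coordinates, then $s_{\alpha_1}$ once more, arriving at
\[
(\lambda-s_{0,b,0})^+ = (0,0,0,0,m,\lambda_6+m-2b,\lambda_7+m,-\lambda_7-m).
\]
The norm difference is now $4(b^2+b(m+2\lambda_7-5)-4m)$; the strict Dirac inequality rewritten as $2\lambda_7>8-m$ gives $m+2\lambda_7-5>3$, so this strictly exceeds $4(b^2+3b-4m)$, which is positive for all $b\geq m+1$.

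The main technical obstacle is producing the $\frk$-dominant form of $\lambda-s_{0,b,0}$ in case 2.7.1 when $b>m$, which requires two applications of $s_{\alpha_1}$ interleaved with a $W(D_5)$-rearrangement; one must verify at the end both the standard $D_5$-type dominance condition $|\mu_1|\leq\mu_2\leq\cdots\leq\mu_5$ and the nonnegativity of $\langle\,\cdot\,,\alpha_1\rangle$. Once the dominant form is in hand, the remaining polynomial comparison with the strict Dirac hypothesis is elementary.
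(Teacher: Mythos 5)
Your proposal is correct and follows essentially the same route as the paper: both split according to whether $b$ exceeds $m=-\tfrac{1}{2}(\lambda_6+2\lambda_7)$, compute the $\frk$-dominant form of $\lambda-s_{0,b,0}$ explicitly in each range, and reduce the Dirac inequality to an elementary polynomial estimate via the strict hypothesis on $s_2$. The paper organizes the cases by the sign of $-\lambda_6-2\lambda_7-2b$ and then branches on $2.7.1$ versus $2.7.2$ inside the second case, whereas you separate the $m=0$ situation up front, but the dominant forms and the resulting quadratic bounds agree with those in the paper.
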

	
	\begin{proof}
		We have $\lambda - s_{0, b, 0} = (0, 0, 0, 0, -b, \lambda_6 - b, \lambda_7 + b, - \lambda_7 - b)$. Now we have two cases.
		
		\textbf{Case 1:} $- \lambda_6 - 2 \lambda_7 - 2b \geq 0.$
		
		In this case 
		$$
		(\lambda - s_{0, b, 0})^{+} = (0, 0, 0, 0, b, \lambda_6 - b, \lambda_7 + b, - \lambda_7 - b) = \lambda - (0,0, 0, 0, -b, b, -b, b).
		$$
		The strict Dirac inequality 
		$$
		\|(\lambda - s_{0, b, 0})^{+} + \rho \|^2 > \| \lambda + \rho \|^2 
		$$
		is equivalent to 
		$$
		2 \left< \gamma \, , \, \lambda + \rho \right> < \| \gamma \|^2,
		$$
		where $\gamma = (0,0, 0, 0, -b, b, -b, b)$ and the last inequality is equivalent to
		$$
		\lambda_6 - 2 \lambda_7 + 18 < 2b.
		$$
		Since in this case  $- \lambda_6 - 2 \lambda_7 - 2b \geq 0$, then $\lambda$ is not in case $2.7.2.$. Therefore, $\lambda$ is in case $2. 7. 1.$. Since the strict Dirac inequality holds for the second basic Schmid module, we have $\lambda_6 - 2 \lambda_7 + 16 < 0$ and therefore $\lambda_6 - 2 \lambda_7 + 18 < 2 \leq 2b$. 
		
		\bigskip
		
		\textbf{Case 2:} $-\lambda_6 - 2 \lambda_7  - 2 b < 0.$
		
		Then 
		$$
		s_{\alpha_1}s_{\eps_3 + \eps_4}s_{\eps_2 - \eps_1}s_{\alpha_1}(0, 0, 0, 0, b, \lambda_6 - b, \lambda_7 + b, - \lambda_7 - b) = \left ( 0, 0, 0, 0, - \frac{\lambda_6 + 2 \lambda_7}{2}, \frac{\lambda_6}{2} - \lambda_7 - 2b, - \frac{\lambda_6}{2}, \frac{\lambda_6}{2}  \right ),
		$$
		so 
		$$
		(\lambda - s_{0, b, 0})^{+} =  \left( 0, 0, 0, 0, - \frac{\lambda_6 + 2 \lambda_7}{2}, \frac{\lambda_6}{2} - \lambda_7 - 2b, - \frac{\lambda_6}{2}, \frac{\lambda_6}{2} \right ) = \lambda - \gamma',
		$$
		where $\gamma' =  \left ( 0, 0, 0, 0, \frac{\lambda_6}{2} + \lambda_7, \frac{\lambda_6}{2} + \lambda_7 + 2b, \lambda_7 +  \frac{\lambda_6}{2}, - \lambda_7 - \frac{\lambda_6}{2} \right )$. The strict Dirac inequality
		$$
		\|(\lambda - s_{0, b, 0})^{+} + \rho \|^2 > \| \lambda + \rho \|^2 
		$$
		is equivalent to 
		\begin{equation}\label{middle}
			-2(\lambda_6 + 2 \lambda_7) < b \left (\lambda_7 - \frac{\lambda_6}{2} + b - 5 \right ).
		\end{equation}
		Since in this case we have $-\lambda_6 - 2 \lambda_7 < 2 b $, it is enough to prove
		$$
		4b \leq b \left ( \lambda_7 - \frac{\lambda_6}{2} + b - 5 \right ).
		$$
		The last inequality is equivalent to
		$$
		\lambda_6 - 2 \lambda_7 + 18 \leq 2b. 
		$$
		If $\lambda$ is in case $2.7.1$, then we have
		$$
		\lambda_6 - 2 \lambda_7 + 16 < 0,
		$$
		since we assumed that the strict Dirac inequality holds for the second basic Schmid module. Therefore 
		$$
		\lambda_6 - 2 \lambda_7 + 18 < 2 \leq 2b. 
		$$
		If $\lambda$ is in case $2.7.2$, then we have $\lambda_6 + 2 \lambda_7 = 0$, so inequality \eqref{middle} is equivalent to 
		$$
		\lambda_6 - 2 \lambda_7 < 2b - 10.
		$$ 
		Since we assumed that the strict Dirac inequality holds for the second basic Schmid module, we have
		$$
		\lambda_6 - 2 \lambda_7 < -8
		$$
		and therefore 
		$$
		\lambda_6 - 2 \lambda_7 < 2 - 10 \leq 2b - 10.
		$$
	\end{proof}

	\begin{lem}
		\label{second Schmid e7}
		Let $\lambda$ be a highest weight such that $(\lambda_1, \lambda_2 ,\lambda_3, \lambda_4, \lambda_5) \neq (0, 0, 0, 0, 0)$ and
		$$
		\|(\lambda - s_2)^{+} + \rho \|^2 > \| \lambda + \rho \|^2.
		$$
		Then 
		$$
		\|(\lambda' - s_2)^{+} + \rho \|^2 > \| \lambda' + \rho \|^2,
		$$
		where $\lambda' = (\lambda - s_2)^{+}$. If $\lambda_i' = 0$ for $i = 1, 2, 3, 4 ,5$, then 
		$$
		\|(\lambda' - s_{0, b, 0})^{+} + \rho \|^2 > \| \lambda' + \rho \|^2, \quad \forall b \in \mathbb{N}.
		$$
	\end{lem}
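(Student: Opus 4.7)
The plan is to emulate the proof of Lemma~\ref{second Schmid e6}: carry out a case analysis on which case 2.$k$ the weight $\lambda$ belongs to. The hypothesis $(\lambda_1, \ldots, \lambda_5) \neq (0, \ldots, 0)$ rules out Cases 2.7.1 and 2.7.2, leaving Cases 2.1, 2.2, 2.3, 2.4, 2.5.1, 2.5.2, 2.6 (with further sub-cases depending on whether $\lambda_5 = \tfrac12$, $1$, or larger). For each case, the explicit form of $\lambda' = (\lambda - s_2)^+$ is already recorded in the derivation of the Dirac inequality for $s_2$ above; from it I identify the case $2.k'$ into which $\lambda'$ falls, and verify the strict Dirac inequality for $s_2$ at $\lambda'$ directly from the hypothesis at $\lambda$.

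The bookkeeping is organized around the observation that the Dirac inequality for $s_2$ at a weight $\mu$ in Case $2.k$ reads $\mu_5 + \mu_6 - 2\mu_7 + c_k < 0$ with $c_k \in \{24, 23, 22, 21, 20, 19, 20, 16, 8\}$ as tabulated earlier. A direct computation gives $\lambda_5' + \lambda_6' - 2\lambda_7' = \lambda_5 + \lambda_6 - 2\lambda_7 - d$, where $d = 4$ in Case 2.1 and $d = 3$ in Cases 2.2--2.6. It therefore suffices to check $c_{k'} \leq c_k + d$ in every admissible transition $k \to k'$, which holds with room to spare because $\lambda'$ always inherits at least as many equalities among the first five coordinates as $\lambda$ had, typically forcing $c_{k'} \leq c_k + 1$. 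A representative verification: in Case 2.2, $\lambda' = (\lambda_1, \lambda_2, \lambda_3, \lambda_5 - 1, \lambda_5, \lambda_6 - 1, \lambda_7 + 1, -\lambda_7 - 1)$ has $\lambda_5' > \lambda_4'$, hence lies in Case 2.1, and then $\lambda_5' + \lambda_6' - 2\lambda_7' + 24 = (\lambda_5 + \lambda_6 - 2\lambda_7) + 21 < -23 + 21 = -2 < 0$.

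For part (ii), the hypothesis $\lambda_i' = 0$ for $i = 1, \ldots, 5$ places $\lambda'$ into Case 2.7.1 or 2.7.2, which is precisely the setting of Lemma~\ref{bs20}. Part (i) supplies the strict inequality $\|(\lambda' - s_2)^+ + \rho\|^2 > \|\lambda' + \rho\|^2$ required as the hypothesis of Lemma~\ref{bs20}, and that lemma then delivers $\|(\lambda' - s_{0, b, 0})^+ + \rho\|^2 > \|\lambda' + \rho\|^2$ for every $b \in \mathbb{N}$.

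The main obstacle will be tracking the integrality condition $\tfrac12(\lambda_1 - \lambda_2 - \cdots - \lambda_7 + \lambda_8) \in \mathbb{N}_0$, which discriminates sub-cases 2.5.1 vs.~2.5.2 and 2.7.1 vs.~2.7.2 and therefore dictates the precise case of $\lambda'$; a short computation shows that the integrality value of $\lambda'$ equals that of $\lambda$ in Cases 2.4--2.6 but drops by $1$ in Case 2.5.1. The degenerate boundary sub-cases in which the first five coordinates of $\lambda'$ collapse to zero---specifically Case 2.1 with $\lambda_1 = \cdots = \lambda_4 = 0$, $\lambda_5 = 1$, and Case 2.5.2 with $\lambda_5 = \tfrac12$---must be identified explicitly, since there $\lambda'$ passes into Case 2.7.1 or 2.7.2 and the threshold $c_{k'}$ drops abruptly to $16$ or $8$. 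The hypothesis at $\lambda$ nevertheless supplies ample margin in these situations: for instance, Case 2.1 with $\lambda_5 = 1$ forces $\lambda_6 - 2\lambda_7 < -25$, comfortably beyond the threshold $-5$ required when $\lambda' \in $ Case 2.7.2.
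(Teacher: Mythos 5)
Your proposal is correct and follows essentially the same route as the paper: a case-by-case tracking of which case $2.k'$ the weight $\lambda'=(\lambda-s_2)^+$ lands in, the observation that $\lambda_5'+\lambda_6'-2\lambda_7'=\lambda_5+\lambda_6-2\lambda_7-4$ in Case 2.1 and $-3$ otherwise, and the reduction of part (ii) to Lemma~\ref{bs20}. (One minor slip in a side remark: in Case 2.5.2 the integrality value of $\lambda'$ actually rises by $1$ rather than staying fixed, but this does not affect the argument, which only needs that $\lambda'$ is never in Case 2.1 when $\lambda$ is in Case 2.5.1, 2.5.2, or 2.6, and additionally not in Case 2.2 when $\lambda$ is in Case 2.5.2.)
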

	\begin{proof}
		We have
		$$
		\lambda' = 
		\begin{cases}
			(\lambda_1, \lambda_2, \lambda_3, \lambda_4, \lambda_5 - 1, \lambda_6 - 1, \lambda_7 + 1, -\lambda_7 - 1), \ \lambda \text{ as in case 2.1} \\
			(\lambda_1, \lambda_2, \lambda_3, \ \lambda_5 - 1, \lambda_5, \lambda_6 - 1, \lambda_7 + 1, -\lambda_7 - 1), \ \lambda \text{ as in case 2.2} \\
			(\lambda_1, \lambda_2, \lambda_5 - 1, \lambda_5, \lambda_5, \lambda_6 - 1, \lambda_7 + 1, -\lambda_7 - 1), \ \lambda \text{ as in case 2.3} \\
			(\lambda_1, \lambda_5  - 1, \lambda_5, \lambda_5, \lambda_5, \lambda_6 - 1, \lambda_7 + 1, -\lambda_7 - 1), \ \lambda \text{ as in case 2.4} \\
			(\lambda_5 - 1, \lambda_5, \lambda_5, \lambda_5, \lambda_5, \lambda_6 - 1, \lambda_7 + 1, -\lambda_7 - 1), \ \lambda \text{ as in case 2.5.1.} \\
			(\lambda_5 - \frac{1}{2}, \lambda_5 - \frac{1}{2}, \lambda_5 - \frac{1}{2}, \lambda_5- \frac{1}{2}, \lambda_5 - \frac{1}{2}, \lambda_6 - \frac{3}{2}, \lambda_7 + \frac{1}{2}, -\lambda_7 - \frac{1}{2}), \ \lambda \text{ as in case 2.5.2.} \\
			(-\lambda_5 + 1, \lambda_5, \lambda_5, \lambda_5, \lambda_5, \lambda_6 - 1, \lambda_7 + 1, -\lambda_7 - 1), \ \lambda \text{ as in case 2.6}
		\end{cases}
		$$
		Therefore, 
		
		$$\lambda_5' + \lambda_6' - 2 \lambda_7' = 
		\begin{cases}
			\lambda_5 + \lambda_6 - 2 \lambda_7 - 4, \ \lambda \text{ as in case 2.1} \\ 
			\lambda_5 + \lambda_6 - 2 \lambda_7 - 3, \ \lambda \text{ as in case 2.2, 2.3, 2.4, 2.5.1, 2.5.2, 2.6}
		\end{cases}
		$$
		Since the strict Dirac inequality holds for the second basic Schmid module, we have 
		$$\lambda_5' + \lambda_6' - 2 \lambda_7' < 
		\begin{cases}
			-28, \ \lambda \text{ as in case 2.1} \\ 
			-26, \ \lambda \text{ as in case 2.2} \\ 
			-25, \ \lambda \text{ as in case 2.3} \\ 
			-24, \ \lambda \text{ as in case 2.4} \\
			-23, \ \lambda \text{ as in case 2.5.1 or 2.6} \\
			-22, \ \lambda \text{ as in case 2.5.2.}.   
		\end{cases}
		$$
		It is clear that
		$$
		\|(\lambda' - s_2)^{+} + \rho \|^2 > \| \lambda' + \rho \|^2
		$$
		if $\lambda$ is in one of the cases $2.1, 2.2, 2.3$ or $2.4$. If $\lambda$ is as in case $2.5.1$ or $2.6$, then $\lambda'$ is not as in case $2.1$ and therefore
		$$
		\|(\lambda' - s_2)^{+} + \rho \|^2 > \| \lambda' + \rho \|^2.
		$$
		If $\lambda$ is as in case $2.5.2$, then $\lambda'$ is not as in case $2.1$ or $2.2$ 
		and therefore
		$$
		\|(\lambda' - s_2)^{+} + \rho \|^2 > \| \lambda' + \rho \|^2.
		$$
		
		So the strict Dirac inequality holds for the second basic Schmid module for the weight $\lambda'$.
		
		If $\lambda_5' =  \lambda_4' = \lambda_3' = \lambda_2' = \lambda_1'= 0$, then  it follows from lemma \ref{bs20} that
		$$
		\|(\lambda' - s_{0,b, 0})^{+} + \rho\|^2 > \|\lambda' + \rho \|^2  \quad \forall b \in \mathbb{N}.
		$$ 
	\end{proof}

	\begin{lem}
		\label{first Schmid e7}
		Let $\lambda$ be a highest weight such that $\lambda$ is as in case $3$ and
		$$
		\|(\lambda - s_1)^{+} + \rho \|^2 > \| \lambda + \rho \|^2.
		$$
		Then 
		$$
		\|(\lambda' - s_1)^{+} + \rho \|^2 > \| \lambda' + \rho \|^2,
		$$
		where $\lambda' = (\lambda - s_1)^{+}$. If $\lambda'$ is as in Case 1 or Case 2, then 
		$$
		\|(\lambda' - s_{a, b, c})^{+} + \rho \|^2 > \| \lambda' + \rho \|^2, \quad \forall a, b, c \in \mathbb{N}_{0}, \quad a + b + c \neq 0.
		$$
	\end{lem}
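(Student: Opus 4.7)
The plan is to mirror the proof of Lemma \ref{first Schmid e6}, performing case analysis on the seven subcases 1.1, 1.2.1, 1.2.2, 1.2.3, 1.2.4, 1.2.5, 1.2.7 that together comprise Case 3, and tracking how $\lambda'=(\lambda-s_1)^+$ sits in the classification. For each of the seven possibilities the explicit expression for $\lambda'$ has already been computed in the discussion preceding Cases 1, 2, 3: in every case $\lambda_7'=\lambda_7+1$, while $(\lambda_1',\ldots,\lambda_5')$ undergoes a half-integer shift dictated by the reflections used to $\mathfrak{k}$-dominantize $\lambda-s_1$, and $\lambda_6'$ equals one of $\lambda_6$, $\lambda_6-\tfrac12$, $\lambda_6-1$. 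Using these expressions, I would identify, for each subcase of $\lambda'$, the list of subcases of $\lambda$ that could have produced it, entirely analogously to the bookkeeping in Lemma \ref{first Schmid e6}.

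For each such transition I then verify the strict basic Dirac inequality at $\lambda'$. The key arithmetic fact is that in each subcase of Case 3 the basic inequality for $s_1$ has the form $\lambda_7\ge c$ with $c\le 8$; when passing from $\lambda$ to $\lambda'$ the threshold $c$ can move only by a bounded half-integer amount, while $\lambda_7'=\lambda_7+1$, so starting from a strict inequality the strictness at $\lambda'$ follows in each entry of the transition table. The computations reduce to a finite list of elementary numerical checks, one per (incoming subcase, outgoing subcase) pair, in direct parallel with the $\mathfrak e_6$ argument.

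For the second assertion, suppose $\lambda'$ lies in Case 1; then the strict Dirac inequality for $s_1$ at $\lambda'$ forces the threshold condition on $\lambda_7'$ that appears in Theorem \ref{first five zero e7}, whence the strict Dirac inequality at $\lambda'$ holds for every Schmid module $s_{a,b,c}$ (and, if not, one propagates through the intermediate Schmid modules using Lemmas \ref{second Schmid e7} and \ref{third Schmid e7} together with Lemma \ref{gen prv}, exactly as in Case 3 for $\mathfrak e_6$). If instead $\lambda'$ lies in Case 2, then Theorem \ref{first four zero e7} plays the analogous role. Checking that $\lambda'$ cannot accidentally fall below the thresholds $\lambda_7'<0$ or $\lambda_7'<4$ of those theorems reduces again to the numerical data of the subcases of Case 3.

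The main obstacle is the bookkeeping: seven incoming subcases and up to nine outgoing subcases produce a fairly long table of transitions, and in each entry one must keep careful track of the half-integer shifts that arise from the $\mathfrak e_7$ weight lattice and of the sign of the integrality parameter $\tfrac12(\lambda_8-\lambda_7-\lambda_6-\lambda_5-\lambda_4-\lambda_3-\lambda_2+\lambda_1)$. No new conceptual idea beyond the $\mathfrak e_6$ strategy is needed; the novelty is purely in the extra subcases caused by the additional basic Schmid module $s_3$ and the half-integer structure.
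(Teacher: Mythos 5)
Your proposal follows the same strategy as the paper's proof: tabulate $\lambda'=(\lambda-s_1)^+$ in each of the seven subcases of Case 3, derive a lower bound on $\lambda_7'$ from the strict basic inequality at $\lambda$, check that $\lambda'$ cannot land in a subcase whose threshold exceeds that bound, and then invoke Theorems \ref{first five zero e7} and \ref{first four zero e7} for the last assertion. Two details you state loosely should be pinned down before carrying it out. First, the shift is $\lambda_7'=\lambda_7+1$ only when $\lambda$ is in case 1.1; in each of cases 1.2.1 through 1.2.7 it is $\lambda_7'=\lambda_7+\tfrac12$, and the resulting lower bounds on $\lambda_7'$ are $9,8,\tfrac{15}{2},7,\tfrac{13}{2},\tfrac{13}{2},6$, with minimum $6$. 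Second, for the last assertion it is not the strict basic Dirac inequality at $\lambda'$ that delivers the needed thresholds: in Case 1 that inequality gives only $\lambda_7'>0$, and in Case 2 only $\lambda_7'>4$, neither of which suffices on its own. What does the work is the uniform bound $\lambda_7'>6$, coming from the fact that $\lambda$ was in Case 3; this meets both the Case 1 threshold $\lambda_7'>4$ of Theorem \ref{first five zero e7}(1) and the Case 2 threshold $\lambda_7'>6$ of Theorem \ref{first four zero e7}(1). The fallback via Lemmas \ref{second Schmid e7}, \ref{third Schmid e7} and Lemma \ref{gen prv} that you mention is unnecessary for this lemma; that propagation machinery enters only in the subsequent Case 3 theorem.
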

	\begin{proof}
		We have
		$$
		\lambda' = 
		\begin{cases}
			(\lambda_1, \lambda_2, \lambda_3, \lambda_4, \lambda_5, \lambda_6, \lambda_7 + 1, -\lambda_7 - 1), \ \lambda \text{ as in case 1.1} \\
			(\lambda_1 + \frac{1}{2}, \lambda_2 - \frac{1}{2}, \lambda_3 - \frac{1}{2}, \lambda_4 - \frac{1}{2}, \lambda_5 - \frac{1}{2}, \lambda_6 - \frac{1}{2}, \lambda_7 + \frac{1}{2}, -\lambda_7 - \frac{1}{2}), \ \lambda \text{ as in case 1.2.1} \\
			(\lambda_2 - \frac{1}{2}, \lambda_2 + \frac{1}{2}, \lambda_3 - \frac{1}{2}, \lambda_4 - \frac{1}{2}, \lambda_5 - \frac{1}{2}, \lambda_6 - \frac{1}{2}, \lambda_7 + \frac{1}{2}, -\lambda_7 - \frac{1}{2}), \ \lambda \text{ as in case 1.2.2} \\
			(\lambda_3 - \frac{1}{2}, \lambda_3 - \frac{1}{2}, \lambda_3 + \frac{1}{2}, \lambda_4 - \frac{1}{2}, \lambda_5 - \frac{1}{2}, \lambda_6 - \frac{1}{2}, \lambda_7 + \frac{1}{2}, -\lambda_7 - \frac{1}{2}), \ \lambda \text{ as in case 1.2.3} \\
			( \frac{1}{2}, \frac{1}{2},  \frac{1}{2}, \lambda_4 - \frac{1}{2}, \lambda_5 - \frac{1}{2}, \lambda_6 - \frac{1}{2}, \lambda_7 + \frac{1}{2}, -\lambda_7 - \frac{1}{2}), \ \lambda \text{ as in case 1.2.4} \\
			(\lambda_4 - \frac{1}{2}, \lambda_4 - \frac{1}{2}, \lambda_4 - \frac{1}{2}, \lambda_4 + \frac{1}{2}, \lambda_5 - \frac{1}{2}, \lambda_6 - \frac{1}{2}, \lambda_7 + \frac{1}{2}, -\lambda_7 - \frac{1}{2}), \ \lambda \text{ as in case 1.2.5} \\
			(\lambda_5 - \frac{1}{2}, \lambda_5 - \frac{1}{2}, \lambda_5 - \frac{1}{2}, \lambda_5 - \frac{1}{2}, \lambda_5 + \frac{1}{2}, \lambda_6 - \frac{1}{2}, \lambda_7 + \frac{1}{2}, -\lambda_7 - \frac{1}{2}), \ \lambda \text{ as in case 1.2.7}
		\end{cases}
		$$
		Since 	
		$$
		\|(\lambda - s_1)^{+} + \rho \|^2 > \| \lambda + \rho \|^2,
		$$
		it follows that
		\begin{equation}\label{cases lambda7'}
			\lambda_7' > 
			\begin{cases}
				9, \ \lambda \text{ as in case 1.1} \\
				8, \ \lambda \text{ as in case 1.2.1} \\
				7  + \frac{1}{2}, \ \lambda \text{ as in case 1.2.2} \\
				7, \ \lambda \text{ as in case 1.2.3} \\
				6 + \frac{1}{2}, \ \lambda \text{ as in case 1.2.4} \\
				6 + \frac{1}{2}, \ \lambda \text{ as in case 1.2.5} \\
				6, \ \lambda \text{ as in case 1.2.7}
			\end{cases}
		\end{equation}
		It is clear that 
		$$
		\|(\lambda' - s_1)^{+} + \rho \|^2 > \| \lambda' + \rho \|^2
		$$
		if $\lambda$ is as in case $1.1$ or $1.2.1$. If $\lambda$ is as in case $1.2.2$, then $\lambda'$ is not as in case $1.1$. Also if $\lambda$ is as in case $1.2.3$, then $\lambda'$ is neither as in case $1.1$ nor as in case $1.2.1$. If $\lambda$ is as in case $1.2.4$ or $1.2.5$, then $\lambda'$ is not in any of the cases $1.1, 1.2.1, 1.2.2$. If $\lambda$ is in case $1.2.7$, then $\lambda'$ is in none of the cases $1.1, 1.2.1, 1.2.2, 1.2.3$. It follows from (\ref{cases lambda7'}) that
		$$
		\|(\lambda' - s_1)^{+} + \rho \|^2 > \| \lambda' + \rho \|^2.
		$$
		Furthermore,  it follows from (\ref{cases lambda7'}) that if $\lambda$ is as in Case 3, then $\lambda_7' > 6$. Therefore, it follows  from the proof of  theorem \ref{first five zero e7} and the proof of  theorem \ref{first four zero e7}  that if  $\lambda'$ is as in Case $1$ (case $1.2.8.$) or as in Case $2$ (case $1.2.6.$), then 
		$$
		\|(\lambda' - s_{a, b, c})^{+} + \rho \|^2 > \| \lambda' + \rho \|^2 \quad \forall a, b, c \in \mathbb{N}_{0},\  a + b + c \neq 0.
		$$
	\end{proof}

	\begin{thm} (Case 3) Let $\lambda$ be the highest weight as in Case 3 such that the strict basic Dirac inequality holds. Then 
		$$
		\|(\lambda - s_{a,b,c})^{+} + \rho  \|^2 > \| \lambda + \rho\|^2, \ a, b, c \in \mathbb{N}_{0}, \ (a,b,c) \neq (0, 0, 0)
		$$
	\end{thm}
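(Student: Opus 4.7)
The plan is to mirror the $\mathfrak{e}_6$ Case 3 argument, organized as a nested induction on the level $a+2b+3c$ of $s_{a,b,c}$, peeling off the basic Schmid modules $s_3$, then $s_2$, then $s_1$ in turn. The common reduction step is an application of Lemma \ref{gen prv}: if $w \in W_\mathfrak{k}$ satisfies $\lambda - s_i = w \lambda'$ with $\lambda' = (\lambda - s_i)^+$, then writing $s' = s_{a,b,c} - s_i$ gives
$$
\|(\lambda - s_{a,b,c})^+ + \rho\|^2 = \|(w\lambda' - s')^+ + \rho\|^2 \geq \|(\lambda' - s')^+ + \rho\|^2.
$$
Combined with the defining inequality $\|\lambda' + \rho\|^2 > \|\lambda + \rho\|^2$, this reduces the desired estimate at $\lambda$ to the analogous estimate at $\lambda'$ for a Schmid module of strictly smaller level.

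The first step is to handle $s_{0,0,c}$ with $c \geq 1$. Since the analysis preceding Theorem \ref{first four zero e7} shows $(\lambda - s_3)^+ = \lambda - s_3$, iterated application of Lemma \ref{third Schmid e7} shows that the strict $s_3$-Dirac inequality is preserved along the chain $\lambda, \lambda - s_3, \lambda - 2s_3, \dots$, so $\|(\lambda - k s_3) + \rho\|^2$ is strictly increasing in $k$ and the claim follows. For the more elaborate case $s_{0,b,c}$ with $b \geq 1$, I write $s_{0,b,c} = s_2 + s_{0,b-1,c}$ and apply the PRV reduction with $\lambda' = (\lambda - s_2)^+$. Lemma \ref{second Schmid e7} propagates the strict $s_2$-Dirac inequality to $\lambda'$ as long as $\lambda'$ still has a nonzero coordinate among the first five; otherwise the second half of that lemma together with Lemma \ref{bs20} and the already-handled $s_{0,0,\cdot}$ step dispose of all Schmid modules of the form $s_{0,b',c'}$ at $\lambda'$, which is exactly what the induction needs.

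Finally, for general $s_{a,b,c}$ with $a \geq 1$, I write $s_{a,b,c} = s_1 + s_{a-1,b,c}$ and apply the PRV reduction with $\tilde\lambda = (\lambda - s_1)^+$. By Lemma \ref{first Schmid e7}, either $\tilde\lambda$ is still in Case 3 and the strict $s_1$-Dirac inequality continues to hold, so that the induction on $a$ continues; or $\tilde\lambda$ drops into Case 1 or Case 2, in which case the explicit bound $\tilde\lambda_7 > 6$ established inside the proof of Lemma \ref{first Schmid e7} places $\tilde\lambda$ inside the hypothesis of Theorem \ref{first five zero e7}(1) or Theorem \ref{first four zero e7}(1), and those theorems immediately yield the strict Dirac inequality for every remaining Schmid module at $\tilde\lambda$ in one shot.

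The main difficulty is bookkeeping rather than any genuine mathematical obstacle: Case 3 splits into seven sub-cases (1.1 and 1.2.1--1.2.5, 1.2.7), the maps $\lambda \mapsto (\lambda - s_i)^+$ transition between these sub-cases in ways that depend on both $i$ and on the starting sub-case, and one must verify that at each transition either the relevant strict Dirac inequality survives or else $\tilde\lambda_7$ (respectively $\lambda_5' + \lambda_6' - 2\lambda_7'$) falls far enough below its threshold to land us in Case 1 or Case 2 with room to spare. This finite case check is exactly what the three propagation lemmas \ref{third Schmid e7}, \ref{second Schmid e7}, \ref{first Schmid e7} accomplish, and once they are in hand the induction glues together just as in the $\mathfrak{e}_6$ argument.
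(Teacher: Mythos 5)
Your overall skeleton — PRV reduction via Lemma \ref{gen prv}, peeling off basic Schmid modules one at a time, and propagating strict Dirac inequalities through Lemmas \ref{first Schmid e7}, \ref{second Schmid e7}, \ref{third Schmid e7}, \ref{bs20} — is the same as the paper's. But you have swapped the order of the first two reductions: you handle $s_{0,0,c}$ first (peel off $s_3$), then $s_{0,b,c}$ with $b\geq 1$ (peel off $s_2$), whereas the paper first establishes all $s_{0,b,0}$ (peel off $s_2$, with Lemma \ref{bs20} as the base for weights whose first five coordinates vanish) and only then adds $c$ by peeling off $s_3$. This reordering is not cosmetic, because the technical lemmas are tailored to the paper's ordering, and it opens a gap in your second step.

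Concretely: when you reduce $s_{0,b,c}$ at $\lambda$ to $s_{0,b-1,c}$ at $\lambda' = (\lambda - s_2)^{+}$ with $b=1$ and $c\geq 1$, you need the full $s_{0,0,c}$ statement at $\lambda'$. Your step 1 proves $s_{0,0,c}$ from the strict $s_3$-Dirac inequality at the starting weight, but Lemma \ref{second Schmid e7} only propagates the strict $s_2$-Dirac inequality to $\lambda'$; it says nothing about the $s_3$-one, and you do not verify it. Worse, when $\lambda'$ has all of its first five coordinates zero, you claim that Lemma \ref{bs20} together with your step 1 "dispose of all Schmid modules of the form $s_{0,b',c'}$ at $\lambda'$," but Lemma \ref{bs20} covers only $s_{0,b',0}$ and step 1 only $s_{0,0,c'}$; the mixed $s_{0,b',c'}$ with $b',c'>0$ are handled by neither. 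The paper's ordering avoids both problems: Lemma \ref{bs20} is exactly the right terminal case for the $b$-induction since only $c=0$ is needed there, and the $s_3$-peel is the clean direction because $(\lambda - s_3)^{+}=\lambda - s_3$ always, stays in Case 3, and preserves the strict basic Dirac inequality, so the $c$-induction never drops out of the hypotheses. You could repair your argument either by adopting the paper's $b$-then-$c$ ordering, or by additionally verifying that the strict $s_3$-Dirac inequality persists at $(\lambda - s_2)^{+}$ and then setting up the $(b,c)$-induction more carefully so that the mixed Schmid modules at a first-five-zero $\lambda'$ are actually covered; as written, neither is done.
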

	
	
	\begin{proof}
		Let $\lambda$ be as in Case 3, and let us assume that the strict basic Dirac inequality holds. First we will prove that in this case we have
		\begin{equation}\label{be7}
			\|(\lambda - s_{0,b, 0})^{+} + \rho\|^2 - \|\lambda + \rho \|^2 > 0 \quad \forall b \in \mathbb{N}.
		\end{equation}
		Let us denote $\lambda' = (\lambda - s_2)^{+}$. We have already proved that if $\lambda$ is in Case 3 and  the strict basic Dirac inequality holds, then the strict Dirac inequality also holds for $s_2$. So we have 
		\begin{equation}\label{lambda's2}
			\| \lambda' + \rho\|^2 > \| \lambda + \rho\|^2.
		\end{equation}
		If $\lambda_1 = \lambda_2 = \lambda_3 = \lambda_4 = \lambda_5 = 0$ then \eqref{be7} obviously follows from Lemma \ref{bs20}. Let us assume that $(\lambda_1, \lambda_2,  \lambda_3, \lambda_4, \lambda_5) \neq (0, 0, 0, 0, 0)$. From Lemma \ref{second Schmid e7} it follows that 
		\begin{equation}\label{lambda''s2}
			\| (\lambda' - s_2)^{+} + \rho \|^2 > \|\lambda' + \rho \|^2.
		\end{equation}
		
		Let us assume that $b > 1$ (\eqref{be7} obviously holds for $b = 1$ since $s_{0,b,0} = s_2$). Let $w\in W_{\mathfrak{k}}$ be such that $\lambda - s_2 = w(\lambda - s_2)^{+}$.  From Lemma \ref{gen prv} we have
		\begin{align*}
			\|(\lambda - s_{0,b,0})^{+} + \rho\|^2 & = \|(\lambda - s_2  - s_{0,b - 1, 0})^{+} + \rho\|^2 
			= \| (w(\lambda - s_2)^{+} -  s_{0, b - 1, 0})^{+} + \rho \|^2 \\
			& \geq \|((\lambda - s_2)^{+} - s_{0, b - 1, 0})^{+} + \rho\|^2 =  \|(\lambda' - s_{0, b- 1, 0})^{+} + \rho\|^2.
		\end{align*}
		It follows from the last inequality and  \eqref{lambda's2} that
		\begin{equation}\label{e7b}
			\|(\lambda - s_{0,b, 0})^{+} + \rho\|^2 - \| \lambda + \rho\|^2 > \|(\lambda' - s_{0,b - 1, 0})^{+} + \rho\|^2 - \| \lambda' + \rho\|^2.
		\end{equation}
		
		If $\lambda_i' = 0$ for $i = 1, 2, 3, 4 ,5$, then  it follows from Lemma \ref{bs20} and \eqref{lambda''s2}
		$$
		\|(\lambda' - s_{0, b-1, 0})^{+} + \rho \|^2 > \| \lambda' + \rho \|^2.
		$$
		and by \eqref{e7b}
		$$
		\|(\lambda - s_{0, b, 0})^{+} + \rho \|^2 - \| \lambda + \rho \|^2 > 0.
		$$
		
		If $(\lambda_1', \lambda_2', \lambda_3', \lambda_4', \lambda_5' ) \neq (0,0,0,0,0)$ and $b = 2$, then \eqref{be7} follows from \eqref{e7b} and \eqref{lambda''s2}. 
		
		If $(\lambda_1', \lambda_2', \lambda_3', \lambda_4', \lambda_5' ) \neq (0,0,0,0,0)$ and $b > 2$, then we have
		$$
		\|(\lambda' - s_{0,b-1,0})^{+} + \rho\|^2 - \| \lambda' + \rho\|^2 > \|(\lambda'' - s_{0,b-2,0})^{+} + \rho\|^2 - \| \lambda'' + \rho\|^2, 
		$$
		where $\lambda'' = (\lambda' - s_2)^{+}$. By induction, it follows that
		$$
		\|(\lambda - s_{0,b,0})^{+} + \rho\|^2 - \| \lambda + \rho\|^2 > 0 \quad  \forall b \in \mathbb{N}.
		$$

		Now we will prove that 	
		\begin{equation}\label{bc7}
			\|(\lambda - s_{0,b, c})^{+} + \rho\|^2 - \|\lambda + \rho \|^2 > 0 \quad \forall b,c \in \mathbb{N}_{0}, \ b + c \neq 0.
		\end{equation}
		Let us denote $\lambda''' = (\lambda - s_3)^{+}$. Since $\lambda$ is in Case 3, it is easy to check that $\lambda'''$ is also in Case 3. We have already proved that if $\lambda$ is in Case 3 and  the strict basic Dirac inequality holds, then the strict Dirac inequality also holds for $s_3$. So we have 
		$$
		\| \lambda''' + \rho\|^2 > \| \lambda + \rho\|^2.
		$$
		Let $w'\in W_k$ be such that $\lambda - s_3 = w'(\lambda - s_3)^{+}$.  From Lemma \eqref{gen prv} we have
		\begin{align*}
			\|(\lambda - s_{0,b,c})^{+} + \rho\|^2 & = \|(\lambda - s_3  - s_{0,b, c - 1})^{+} + \rho\|^2 
			= \| (w'(\lambda - s_3)^{+} -  s_{0, b, c - 1})^{+} + \rho \|^2 \\
			& \geq \|((\lambda - s_3)^{+} - s_{0, b, c - 1})^{+} + \rho\|^2 =  \|(\lambda''' - s_{0, b, c - 1})^{+} + \rho\|^2,
		\end{align*}
		if $c > 1$.
		From the last two inequalities it follows that
		\begin{equation}\label{e7bc}
			\|(\lambda - s_{0,b, c})^{+} + \rho\|^2 - \| \lambda + \rho\|^2 > \|(\lambda''' - s_{0,b, c - 1})^{+} + \rho\|^2 - \| \lambda''' + \rho\|^2 \quad \forall b, c \in \mathbb{N}_{0}, b + c \neq 0
		\end{equation}
		
		Now \eqref{bc7} follows from Lemma \ref{third Schmid e7}, \eqref{e7bc} and \eqref{be7} by induction on $c$. 
		
		\bigskip
		
		Now we will prove that if $\lambda$ is in Case 3, and the strict basic Dirac inequality holds, then 
		$$
		\|(\lambda - s_{a,b,c})^{+} + \rho\|^2 - \|\lambda + \rho \|^2 > 0 \quad \forall a, b,c \in \mathbb{N}_{0}, (a, b, c) \neq (0, 0, 0).
		$$ 
		Lat us assume that $a > 0$ (if $a = 0$, the last inequality is exactly \eqref{bc7}).
		Let us denote $\tilde{\lambda} = (\lambda - s_1)^{+}$. We have 
		$$
		\| \tilde{\lambda} + \rho\|^2 > \| \lambda + \rho\|^2.
		$$
		Let $\tilde{w}\in W_k$ be such that $\lambda - s_1 = \tilde{w}(\lambda - s_1)^{+}$.  From Corollary 2.9 we have
		\begin{align*}
			\|(\lambda - s_{a,b, c})^{+} + \rho\|^2 & = \|(\lambda - s_1  - s_{a - 1, b, c})^{+} + \rho\|^2 
			= \| (\tilde{w}(\lambda - s_1)^{+} -  s_{a - 1, b, c})^{+} + \rho \|^2 \\
			& \geq \|((\lambda - s_1)^{+} - s_{a-1,b, c})^{+} + \rho\|^2 =  \|(\tilde{\lambda} - s_{a - 1,b, c})^{+} + \rho\|^2.
		\end{align*}
		From the last two inequalities it follows that
		\begin{equation}\label{e7abc}
			\|(\lambda - s_{a,b,c})^{+} + \rho\|^2 - \| \lambda + \rho\|^2 > \|(\tilde{\lambda} - s_{a - 1,b, c})^{+} + \rho\|^2 - \| \tilde{\lambda} + \rho\|^2.
		\end{equation}
		If $\tilde{\lambda}$ is in Case 1 or Case 2, then
		it follows from lemma \ref{first Schmid e7} that
		$$
		\|(\tilde{\lambda} - s_{a - 1, b, c})^{+} + \rho \|^2 > \| \tilde{\lambda} + \rho \|^2,
		$$
		and from \eqref{e7abc} it follows that
		$$
		\|(\lambda - s_{a, b, c})^{+} + \rho \|^2 - \| \lambda + \rho \|^2 > 0 \quad \forall a, b, c \in \mathbb{N}_{0}, a + b + c \neq 0.
		$$
		If $\tilde{\lambda}$ is in Case 3 and $a > 1$ then it follows from Lemma \ref{first Schmid e7} and \eqref{e7abc} that 
		$$
		\|(\tilde{\lambda} - s_{a - 1,b, c})^{+} + \rho\|^2 - \| \tilde{\lambda} + \rho\|^2 > \|(\bar{\lambda} - s_{a - 2,b, c})^{+} + \rho\|^2 - \| \bar{\lambda} + \rho\|^2, 
		$$
		where $\bar{\lambda} = (\tilde{\lambda} - s_1)^{+}$. By induction on $a$ and by (\ref{bc7}), it follows
		$$
		\|(\lambda - s_{a,b, c})^{+} + \rho\|^2 - \| \lambda + \rho\|^2 > 0 \quad  \forall a, b, c \in \mathbb{N}_{0}, (a, b, c) \neq (0,0, 0).
		$$
	\end{proof}


\begin{thebibliography}{EHW}
		
		\bibitem [A]{A} J.~Adams, \emph{Unitary highest weight modules},
		Adv. Math. \textbf{63} (1987), 113--137.
		
		
		
		\bibitem[DES]{DES} M. Davidson, T. Enright, R. Stanke:
		\emph{Differential operators and highest weight representations}, Memoirs of AMS, {\bf 455}, 1991.
		
		
		
		\bibitem[EHW]{EHW} T.~Enright, R.~Howe, N.~Wallach, \emph{A
			classification of unitary highest weight modules}, in
		\emph{Representation theory of reductive groups, Park City, Utah, 1982}, Birkh\"auser, Boston, 1983, 97--143.
		
		\bibitem[EJ]{EJ} T.~Enright, A.~Joseph, \emph{An intrinsic analysis of unitarizable highest weight modules}, Mathematische Annalen \textbf{288.4}, 1990,  571-594.
		
		\bibitem[ES]{ES} T.~J.~Enright, B.~Shelton, \emph{Categories of highest weight modules: applications to classical Hermitian symmetric pairs}, Mem. Amer. Math. Soc. \textbf{67} (1987), no. 367, iv+94.
		
		
		
		
		\bibitem[H]{H} J.-S.~Huang, {\it Dirac cohomology, elliptic representations and endoscopy, in Representations of
			Reductive Groups, in Honor of 60th Birthday of David Vogan}, M. Nevins, P. Trapa (eds), Birkhäuser, 2015.
		
		\bibitem [HKP]{HKP} J.-S.~Huang, Y.-F.~Kang and P.~Pand\v zi\'c, {\it Dirac cohomology of some Harish-Chandra modules}, Transform. Groups \textbf{14} (2009), 163--173.
		
		\bibitem [HP1]{HP1} J.-S.~Huang, P.~Pand\v zi\'c, \emph{Dirac
			cohomology, unitary representations and a proof of a conjecture of
			Vogan}, J. Amer. Math. Soc.  \textbf{15} (2002), 185--202.
		
		\bibitem [HP2]{HP2} J.-S.~Huang, P.~Pand\v zi\'c, \emph{Dirac
			Operators in Representation Theory}, Mathematics: Theory and
		Applications, Birkhauser, 2006.
		
		
		\bibitem[J]{J} H.P.~Jakobsen, \emph{Hermitian symmetric spaces and their unitary highest weight modules}, J. Funct. Anal. \textbf{52} (1983), 385--412.
		
		
		
		
		
		
		
		
		
		
		
		\bibitem[PPST1]{PPST1} P. Pandžić, A. Prlić, V. Souček, V. Tuček, Dirac inequality for highest weight Harish-Chandra modules I, preprint, 2022
		
		\bibitem[PPST2]{PPST2} P. Pandžić, A. Prlić, V. Souček, V. Tuček, On the classification of unitary highest weight modules, in preparation
		
		\bibitem[S]{S} W.~Schmid, \emph{Die Randwerte holomorpher Funktionen 
			auf hermitesch symmetrischen R\"aumen}, Invent. Math. \textbf{9} (1969/1970), 61–-80.
		
		
	\end{thebibliography}
\end{document}